\def \C {\mathbb{C}}
\def \D {\mathbb{D}}
\def \R {\mathbb{R}}
\def \T {\mathbb{T}}
\def \ss  {\mathcal{S}}
\def\d{\mathrm{d}}
\newtheorem{theorem}{Theorem}[section]
\newtheorem{lemma}[theorem]{Lemma}
\newtheorem{remark}[theorem]{Remark}
\newtheorem{corollary}[theorem]{Corollary}
	\definecolor{uuuuuu}{rgb}{0.26666666666666666,0.26666666666666666,0.26666666666666666}
	\definecolor{ffqqqq}{rgb}{1.,0.,0.}
	\definecolor{qqqqff}{rgb}{0.,0.,1.}
	\definecolor{ffqqqq}{rgb}{1.,0.,0.}
\begin{document}

\title{Phase Retrieval for Wide Band Signals}
\author[Ph. Jaming, K. Kellay, R. Perez]{Philippe Jaming, Karim Kellay \& Rolando Perez III}
\address{Ph. Jaming, Universit\'{e} de Bordeaux, CNRS, Bordeaux INP, IMB, UMR 5251, F-33400, Talence, France}  
\email{Philippe.Jaming@math.u-bordeaux.fr}
\address{K. Kellay, Universit\'{e} de Bordeaux, CNRS, Bordeaux INP, IMB, UMR 5251, F-33400, Talence, France}  
\email{kkellay@math.u-bordeaux.fr}
\address{R. Perez III, Universit\'{e} de Bordeaux, CNRS, Bordeaux INP, IMB, UMR 5251, F-33400, Talence, France}
\address{Institute of Mathematics, University of the Philippines Diliman, 1101 Quezon City, Philippines}
\email{roperez@u-bordeaux.fr}
\keywords{phase retrieval, Hardy spaces}
\subjclass[2010]{30D05, 30H10, 42B10, 94A12}

\begin{abstract}
This study investigates the phase retrieval problem for wide-band signals. We solve the following problem: given 
	$f\in L^2(\R)$ with Fourier transform in $L^2(\R,e^{2c|x|}\,\mbox{d}x)$, we find all functions $g\in L^2(\R)$ 
	with Fourier transform in $L^2(\R,e^{2c|x|}\,\mbox{d}x)$, such that $|f(x)|=|g(x)|$ for all $x\in \R$. To do so, we first translate the problem to functions in the Hardy spaces on the disc via a conformal bijection, and take advantage of the inner-outer factorization. We also consider the same problem with additional constraints involving some transforms of $f$ and $g$, and determine if these constraints force uniqueness of the solution. 
\end{abstract}
\maketitle 

\section{Introduction}

The phase retrieval problem refers to the recovery of the phase of a function $f$ using given data on its magnitude $|f|$ and a priori assumptions on $f$. These problems are widely studied because of their physical applications in which the quantities involved are identified by their magnitude and phase, where the phase is difficult to measure while the magnitude is easily obtainable. Some physical applications of phase retrieval problems include works related to astronomy \cite{Da}, lens design \cite{Do}, x-ray crystallography \cite{Mi}, inverse scattering \cite{Sa}, and optics \cite{Se}. More physical examples were given in the survey articles of Luke {\it et. al.} \cite{LBL}
Klibanov {\it et. al.} \cite{Kli} and the book of Hurt \cite{Hu}. 
A more recent overview of the phase retrieval problem was given by the article of Grohs {\it et. al.} \cite{GKR}, which discussed a more general formulation of the phase retrieval problem using Banach spaces and bounded linear functionals, as well as results related to the uniqueness and stability properties of the problem.

Phase retrieval problems have recently been given more interest because of progress in the discrete (finite-dimensional) case, starting with the work of Cand\`es {\it et. al.} \cite{CLS}
and of Waldspurger {\it et. al.} \cite{WAM}, which formulated the phase retrieval problem as an optimization problem
(though the investigation of optimization tools in phase retrieval algorithms seems
to go back at least to the work of Luke {\it et. al.} \cite{BCL,BL,LBL}). On the other hand, phase retrieval problems devoted to the continuous (infinite-dimensional) case have been solved in various settings, such as, for one-dimensional band-limited functions \cite{Aku1,Aku2,Wa},
for functions in the Hardy space on the disc without singular parts \cite{Bo}, for real-valued band-limited functions from the absolute values of their samples \cite{Th}, for 2$\pi$-periodic time-limited signals from magnitude values on the unit circle \cite{FL}, and for real-valued functions in the Sobolev spaces \cite{Han}. We refer the reader to \cite{ADGY} for more discussion and examples of continuous phase retrieval problems, in particular on the stability of the problem and other useful references. Our aim in this paper is to investigate the phase retrieval problem for \textit{wide-band} functions, namely functions with mildly decreasing Fourier transforms.

Before we summarize our results, let us give a quick overview of the phase retrieval problem in the band-limited and narrow band cases: given a band-limited function (i.e. a function with compactly supported Fourier transform) $f\in L^2(\R)$,
find all band-limited functions $g\in L^2(\R)$ such that 
\begin{equation}
|f(x)|=|g(x)|,\qquad x\in\R.
\label{eq:band1}
\end{equation}
This problem in the class of compactly supported functions has been solved by Akutowicz \cite{Aku1,Aku2} in the mid-1950's, and independently by Walther \cite{Wa} in 1963. To solve the problem, they first used the Paley-Wiener Theorem which states that 
$f$ and $g$ extend into holomorphic functions in the complex plane
that are of exponential type, that is, $f,g$ grow like $e^{a|z|}$.
Next, they showed that \eqref{eq:band1} is then equivalent to
\begin{equation}
f(z)\overline{f(\bar z)}=g(z)\overline{g(\bar z)},  \qquad z\in\C.
\label{eq:band2}
\end{equation}
Observe that \eqref{eq:band2} is a reformulation of \eqref{eq:band1} when $z$ is {\em real} and is an equality between two holomorphic functions so that it is valid for all $z\in\C$.
Finally, they used the Hadamard Factorization Theorem which states that holomorphic functions of exponential type are characterized by their zeros. Now, \eqref{eq:band2} implies that each zero of $g$ is either a zero of $f$ or a complex conjugate of such a zero. Thus, it follows that $g$ can be obtained by changing arbitrarily many zeros of $f$ into their complex conjugates in the Hadamard factorization of $g$, and this is called {\em zero-flipping}.

McDonald \cite{Mc} extended this proof to functions that have Fourier transforms with very fast decrease at infinity. For instance, in the case of Gaussian decrease, if $
|\widehat{f}(\xi)|,|\widehat{g}(\xi)|\lesssim e^{-a|\xi|^2}$, $a>0$, then $f,g$ extend to holomorphic functions of exponential type 2 so that Hadamard factorization can still be used. Thus, the solutions also can be obtained by zero-flipping. Furthermore, this proof extends to functions which satisfy an exponential decay condition of the form
\begin{equation}
\label{eq:decay}
|\widehat{f}(\xi)|\lesssim e^{-a|\xi|^\alpha}\qquad \text{ and }\qquad |\widehat{g}(\xi)|\lesssim e^{-a|\xi|^\alpha}
\end{equation}
for some $a>0$ and $\alpha>1$,
but breaks down at $\alpha=1$. Therefore, the main goal of this work is to investigate the phase retrieval problem for functions satisfying \eqref{eq:decay} but for $\alpha=1$, i.e.
$|\widehat f(\xi)| |\lesssim e^{-a|\xi|}$ and $|\widehat g(\xi)|\lesssim e^{-a|\xi|}$. Functions with this decay are sometimes called wide-band signals in the engineering community
while those with a decay like \eqref{eq:decay} for $\alpha>1$ are said to be narrow-banded. Here, the functions $f$ and $g$ only extend holomorphically to an horizontal strip
$\mathcal{S}_a=\{z\in\C\,:|\text{Im} z|<a\}$ in the complex plane so that
\eqref{eq:band2} only holds for $z\in \mathcal{S}_a$, which implies that Hadamard factorization cannot be used. To 
overcome this difficulty, we first reduce the problem to the Hardy space on the disc using a conformal bijection. 
Recall that the Hardy spaces are well-known to be defined on the unit disc and the upper half plane (see for example 
\cite{Du,Ga,Ko,Ma,SW}). While it is possible to work directly on a corresponding Hardy space on the strip, we have 
decided that the proofs can be made more familiar on the unit disc. One of the reasons for working on a more familiar 
Hardy space is that the expression for the Poisson kernel is sufficiently simple to work with. Due to the conformal 
mapping, the expressions of the Poisson kernel in the other domains can be more complicated even in the simplest 
situations like the strip. By transferring the problem to the Hardy space on the unit disc, we then exploit the 
inner-outer-Blashcke factorization in this space. The solution is now more evolved than the band-limited case as, aside from zero-flipping, the singular inner function and the outer function also take part in the solution. For our main results in Corollary \ref{maincoro}, we go back to the strip to solve our initial problem using an equivalent inner-outer-Blaschke factorization on the strip, and provide analogs of the consequences shown on the phase retrieval problem on the disc.

Generally speaking, the solution set of a phase retrieval problem is very large. Thus, additional constraints are considered to reduce the solution set. For instance, Klibanov {\it et. al.} provided different 
examples of supplementary information to force uniqueness, or at least to reduce the solution set. 
We here follow the same aim of reducing the set of solutions by coupling two phase retrieval problems.  For our first coupled problem, we add a condition involving a fixed reference signal $h$:
$|g-h|=|f-h|.$ We use its geometric interpretation to show that this problem has exactly two solutions. We will also look at the problem with an additional condition involving the Fourier transforms: $|\widehat{g}|=|\widehat{f}|$. This coupled problem is also known as the Pauli problem. Here, we do not get uniqueness for our case, in fact in Theorem \ref{SharpRiesz}, we explicitly construct an uncountable family of solutions using Riesz products. Next, we look at the coupled problem with the condition $|Dg|=|Df|$ where $D$ is a derivation operator. Using the special properties of $D,f$ and $g$, we show that this coupled problem has exactly two solutions. Finally, for our last coupled problem, we add the condition $|g|=|f|$ on a segment on the strip. We show the uniqueness of the solution by using our main results. More precisely, in Theorem \ref{th:anglestrip} we prove that if $f$ and $g$ are holomorphic functions on Hardy space on the strip  and if $|f|=|g|$ on two intersecting segments inside the strip where the angle between these segments is an irrational multiple of $\pi$, then $f=g$ up to unimodular constant.

This work is organized as follows. Section 2 is a quick review of definitions and results on analysis and Hardy spaces. Section 3 is devoted to the solution of the phase retrieval problem in the wide-band case. We will look at the phase retrieval problem on the unit disc and on the strip. Section 4 is devoted to the coupled phase retrieval problems.

\section{Preliminaries}

\subsection{Notation}

For a domain $\Omega\subset\C$, $\mathrm{Hol}(\Omega)$ is the set of holomorphic functions on $\Omega$. For $F\in\mathrm{Hol}(\Omega)$ we denote by
$Z(F)$ the set of zeros of $F$, counted with multiplicity. Write $\overline{\Omega}=\{\bar z:z\in\Omega\}$ and if $F\in\mathrm{Hol}(\Omega)$, we denote by $F^*$ the function in $\mathrm{Hol}(\overline{\Omega})$ defined by $F^*(z)=\overline{F(\bar z)}$. It will be convenient to denote the conjugation function by $C$, where $C(z)=\bar{z}$ for all $z\in\C$. 

The unit disc $\mathbb D$ is defined as $\D=\{z\in\C:|z|<1\}$ and its boundary $\T$ is defined by $\T=\{z\in\C:|z|=1\}$. Let $c>0$ and $\mathcal{S}_c$ be the strip defined as $\mathcal{S}_c:=\{ z \in\mathbb C : |\text{Im} z|<c \}$, and $\mathcal{S}:=\mathcal{S}_1$.

For a nonnegative and locally integrable function $\omega$ on $\mathbb R$, the weighted $L^2$ space on $\R$ is given by
$$
L^2_{\omega}(\mathbb R)=L^2(\mathbb R, \omega \d t)=\Big\{f\text{ is measurable}:||f||^2_{L^2_{\omega}(\mathbb R)}=\int_{\mathbb R}|f(t)|^2\omega(t)\,\d t<+\infty\Big\}.
$$
Finally, consider a measure space $(X_1,\mathcal A_1,\mu)$, a measurable space $(X_2,\mathcal A_2)$, and a measurable map $\psi:X_1\to X_2$. Recall that the pushforward measure of $\mu$ by $\psi$ is given by
$$
{\psi}_*\mu(A) = \mu(\psi^{-1}(A)) 
$$
for $A\in\mathcal A_2$. Equivalently, if $h$ is a function such that $h\circ\psi$ is integrable on $X_1$ with respect to $\mu$, then we have the change of variables formula
$$
\int_{X_2}h\,\d(\psi_*\mu)=\int_{X_1}h\circ\psi\,\d\mu.
$$

\subsection{Hardy Spaces on the Disc}
The Hardy space on the disc $\mathbb D$ is defined as
$$
H^2(\mathbb D)=\Big\{F\in \text{Hol}(\mathbb D): ||F||^2_{H^2(\mathbb D)}=\sup_{0\leq r<1}\dfrac{1}{2\pi}\int_{-\pi}^{\pi}|F(re^{i\theta})|^2~\d\theta<+\infty\Big\}.
$$
We will need the following key facts. First, every $F\in H^2(\D)$ admits a radial limit $F(e^{i\theta})=\lim_{r\to1}F(re^{i\theta})$ for almost every $e^{i\theta}\in \T$
(see e.g. \cite[Theorem 2.2]{Du}) with $F\in L^2(\T)$, $\widehat{F}(n)=0$ for $n=-1,-2,...$, and 
$\log|F|\in L^1(\T)$. Furthermore \cite[Section 7.6]{Ma}, every function $F\in H^2(\D)$ can be uniquely decomposed as
$$
F=e^{i\gamma}B_F S_F O_F
$$
where $e^{i\gamma}\in\T$, $B_F$ is the Blaschke product formed from the zeros of $F$, $S_F$ is a singular inner function, and $O_F$ is the outer part of $F$. More precisely, the Blaschke product is defined as 
\begin{equation}
\label{eq:B-F}
B_F(w)=  \prod_{ \alpha\in Z(F)} b_\alpha(w) , \qquad w\in \D
\end{equation}
where $b_\alpha(w)= \dfrac{\alpha}{|\alpha|}\dfrac{\alpha-w}{1-\bar{\alpha}w}$ and $\displaystyle\sum_{\alpha\in Z(F)}(1-|\alpha|)<\infty$. The singular part is given by
\begin{equation}
\label{eq:S-F}
S_F(w)=\exp\Big(\int_{\mathbb T}\dfrac{w+e^{i\theta}}{w-e^{i\theta}}~\d\nu_F(e^{i\theta})\Big),\qquad w\in \D
\end{equation}
where $\nu_F$ is a finite positive singular measure (with respect to the Lebesgue measure). Finally, the outer part is determined by the modulus of the radial limit of $F$,
\begin{equation}
\label{eq:O-F} O_F(w)=\exp\Big(\dfrac{1}{2\pi}\int_{-\pi}^{\pi}\dfrac{e^{i\theta}+w}{e^{i\theta}-w}\log|F(e^{i\theta})|~\d\theta\Big),\qquad w\in \D.
\end{equation}

\subsection{Hardy Spaces on the Strip}

There are essentially two ways of defining the Hardy space on the strip $\mathcal{S}$.
To start, let us define the conformal bijection $\phi:\mathcal S\longrightarrow \mathbb D$ given by
$$
\phi(z):=\tanh\Big(\frac{\pi}{4}z\Big),\quad z\in\ss.
$$
The inverse mapping is given by the function
\begin{equation}
	\label{inverse-phi}
	\phi^{-1}(w) =\frac{2}{\pi}\ln \frac{1+w}{1-w},\quad w\in \D.
\end{equation}
Observe that $\phi$ has the following properties: $\phi^*=\phi$, $\phi(\R)=\,]-1,1[\,$, 
$\phi:\partial \ss \longrightarrow \T\setminus\{-1,1\}$ is a bijection, and 
$\phi(\partial \mathcal{S}\cap\C_{\pm})=\T^*_{\pm}$, where $\C_+,\C_-$ denote the upper and lower halves of $\C$ 
respectively, and $\T^*_+,\T^*_-$ denote the upper and lower halves of $\T\setminus\{-1,1\}$ respectively.
On one hand we shall consider the following Hardy spaces defined as
$$
H^2(\mathcal S)=\Big\{f\in \text{Hol}(\mathcal S): f\circ\phi^{-1}\in H^2(\mathbb D)\Big\},
$$
and $||f||_{H^2(\mathcal S)}=||f\circ\phi^{-1}||_{H^2(\mathbb D)}$. It can then be shown \cite[Theorem 2.2]{BK} 
that $H^2(\mathcal{S})=H^2_W(\mathcal{S})$  and $||f||_{H^2(\mathcal S)}= ||f||_{H^2_W(\mathcal S)}$  for all  
$f\in H^2(\mathcal{S})$, where
$$
H^2_{W}(\mathcal S)=\Big\{f\in \text{Hol}(\mathcal S): ||f||^2_{H^2_W(\mathcal S)}=\sup_{|y|<1}\int_{\mathbb R}\dfrac{|f(t+iy)|^2}{|W(t+iy)|}~\d t<+\infty\Big\},
$$
and $W(z)=4\cosh^2(\frac{\pi}{4}z)=\dfrac{\pi}{\phi'(z)}$ for all $z\in\mathcal{S}$.

Now this last space can be identified to the natural analogue of the Hardy space on the disc:
$$
H^2_{\tau}(\mathcal S)=\Big\{f\in \text{Hol}(\mathcal S): ||f||^2_{H^2_{\tau}(\mathcal S)}=\sup_{|y|<1}\int_{\mathbb R}|f(t+iy)|^2~\d t<+\infty\Big\}.
$$
More precisely $f\in H^2_\tau (\mathcal S)$ if and only if $W^{1/2}f\in H^2_W(\mathcal S)$, and it was shown in \cite[Chapter VI, Section 7.1]{Ka} that $f\in H^2_\tau (\mathcal S)$ if and only if $\widehat{f}\in L^2(\R,e^{2|\xi|}d\xi)$.

Finally, we obtain the factorization on $H^2_\tau(\mathcal{S})$.

\begin{lemma}
\label{lem:fact-strip}
Let $f\in H^2_{\tau} (\mathcal S)$. Then the unique inner-outer factorization of $f$ is given by
$$
f(z)=e^{i\gamma}\dfrac{B_F(\phi(z))S_F(\phi(z))O_F(\phi(z))}{W(z)^{1/2}},\quad\text{for }z\in\ss
$$
where $F=(W^{1/2})f\circ\phi^{-1}\in H^2(\D)$ and for some $\gamma\in\R$. For all $z\in\mathcal{S}$, the 
Blaschke product $B_f(z)=B_F(\phi(z))$ is given by
\begin{equation}
\label{eq:B-f}
B_f(z)=\prod_{ \beta\in Z(f)}b_{\phi(\beta)}(\phi(z)),
\end{equation}
while the singular inner function $S_f(z)=S_F(\phi(z))$ is given by
\begin{align}
\label{eq:S-f}
\begin{split}
S_f(z)= \exp\left(-a_{\{+1\}} e^{\frac{\pi}{2}z}-a_{\{-1\}}e^{-\frac{\pi}{2}z}+\int_{\mathbb \partial \mathcal S}
\dfrac{\phi(z)+\phi(\zeta)}{\phi(z)-\phi(\zeta)}\,\d\mu_f(\zeta)\right),
\end{split}
\end{align}
for some positive constants $a_{\{\pm 1\}}\geq 0$, where $\mu_f={\phi^{-1}}_*\,\nu_F$ 
is the pushforward measure of $\nu_F$ on $\partial \mathcal S$, and the outer function $O_f(z)=O_F(\phi(z))$ is 
given by
\begin{align}
\label{eq:O-f}
 \begin{split}
O_f(z)&=\exp\Bigg(\dfrac{-1}{2\pi i}\int_{\mathbb R}\dfrac{\phi(z)+\phi(x+i)}{\phi(z)-\phi(x+i)}\dfrac{\phi'(x+i)}{\phi(x+i)}\log|W(x+i)^{1/2}f(x+i)|\,\d x\\
&\quad+\dfrac{1}{2\pi i}\int_{\mathbb R}\dfrac{\phi(z)+\phi(x-i)}{\phi(z)-\phi(x-i)}\dfrac{\phi'(x-i)}{\phi(x-i)}\log|W(x-i)^{1/2}f(x-i)|\,\d x\Bigg).
\end{split}
\end{align}
\end{lemma}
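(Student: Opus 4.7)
The plan is to transfer the inner--outer--Blaschke factorization from $H^2(\mathbb{D})$ to $H^2_\tau(\mathcal{S})$ using the isometric identification $f \mapsto F := W^{1/2}(f\circ \phi^{-1})$ recalled just above the lemma (which says $f \in H^2_\tau(\mathcal{S}) \iff F \in H^2(\mathbb{D})$), and then to unwind each of the three factors explicitly. Applying the disc factorization to $F$ gives $F = e^{i\gamma} B_F S_F O_F$ with the usual uniqueness, so $f(z) = W(z)^{-1/2} F(\phi(z))$ yields the main formula, and uniqueness on $\mathcal{S}$ is inherited from that on $\mathbb{D}$ because $W^{1/2}$ is nowhere zero on $\mathcal{S}$.

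For the Blaschke product, the zero sets are in bijection: since $W^{1/2}$ has no zeros in $\mathcal{S}$, $F(w)=0$ iff $f(\phi^{-1}(w))=0$, so $Z(F) = \phi(Z(f))$ with multiplicities, and \eqref{eq:B-F} directly becomes \eqref{eq:B-f}. For the singular part, I would split $\nu_F = a_{\{1\}}\delta_1 + a_{\{-1\}}\delta_{-1} + \nu_F'$ with $\nu_F'$ supported on $\mathbb{T}\setminus\{\pm 1\}$, and use the identities
\[
\frac{\phi(z)+1}{\phi(z)-1} = -e^{\pi z/2}, \qquad \frac{\phi(z)-1}{\phi(z)+1} = -e^{-\pi z/2},
\]
which follow from $\phi(z) = \tanh(\pi z/4) = (e^{\pi z/2}-1)/(e^{\pi z/2}+1)$. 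These convert the atomic contributions of $\nu_F$ at $\pm 1$ into the two exponential terms in \eqref{eq:S-f}, while the remaining integral is transported to $\partial \mathcal{S}$ by $\zeta = \phi^{-1}(e^{i\theta})$, setting $\mu_f = \phi^{-1}_{*}\nu_F'$.

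For the outer function, the key is a careful change of variables on each semicircle. Split $\mathbb{T}\setminus\{\pm 1\} = \mathbb{T}^*_+ \cup \mathbb{T}^*_-$ and parametrize $\mathbb{T}^*_+$ by $e^{i\theta} = \phi(x+i)$, $x \in \mathbb{R}$, which gives the Jacobian $d\theta = -i\,\phi'(x+i)/\phi(x+i)\,dx$. Observe that $\theta$ \emph{decreases} from $\pi$ to $0$ as $x$ increases from $-\infty$ to $+\infty$; reversing the integration limits produces the sign $-1/(2\pi i)$ in front of the upper boundary integral. The analogous parametrization $e^{i\theta} = \phi(x-i)$ on $\mathbb{T}^*_-$ makes $\theta$ \emph{increase} with $x$, yielding the opposite sign $+1/(2\pi i)$. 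Substituting the boundary identity $F(\phi(x\pm i)) = W(x\pm i)^{1/2} f(x\pm i)$ inside the logarithm then produces exactly \eqref{eq:O-f}.

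The main technical obstacle is the orientation/sign accounting in the outer-function step, together with justifying the boundary identity $F(\phi(x\pm i)) = W(x\pm i)^{1/2} f(x\pm i)$ in the almost-everywhere sense (the non-tangential limits on $\mathbb{T}$ for $F$ correspond, via $\phi$, to the boundary values of $f$ on the two lines $\mathrm{Im}\,z = \pm 1$). Once this is done, the decomposition of $\nu_F$ at $\pm 1$ and the two identities for $(\phi(z)\pm 1)/(\phi(z)\mp 1)$ make the singular and Blaschke parts essentially routine.
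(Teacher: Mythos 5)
Your proposal is correct and is essentially the paper's own proof: transfer the factorization via $F=W^{1/2}(f\circ\phi^{-1})\in H^2(\mathbb D)$, note that $Z(F)=\phi(Z(f))$ because $W^{1/2}$ is zero-free on $\mathcal S$, convert the atoms of $\nu_F$ at $\pm1$ using $(\phi(z)\pm1)/(\phi(z)\mp1)=-e^{\pm\frac{\pi}{2}z}$ and push the remaining singular mass forward to $\partial\mathcal S$, and substitute $e^{i\theta}=\phi(x\pm i)$ on the two semicircles for the outer part. Your bookkeeping of the Jacobian $\d\theta=-i\,\phi'(x\pm i)/\phi(x\pm i)\,\d x$ and of the orientation of the parametrization is in fact more explicit than the paper's, which performs the same substitutions without comment; the residual question of whether the kernel is written as $\frac{\phi(z)+\phi(x\pm i)}{\phi(z)-\phi(x\pm i)}$ or its negative in \eqref{eq:O-f} is a sign-convention point shared with the paper's own derivation, not a gap in your argument.
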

\begin{proof}
For $F\in H^2(\mathbb D)$ and $z\in \mathcal S$, according to the above connection between Hardy spaces, we have 
$F(\phi(z))=W^{1/2}(z)f(z)$ and equivalently,
$$
f(z)=\dfrac{F(\phi(z))}{W(z)^{1/2}}=e^{i\gamma}\dfrac{B_F(\phi(z))S_F(\phi(z))O_F(\phi(z))}{W(z)^{1/2}}.
$$
Note that this is well-defined on $\mathcal S$ since $W(z)=\dfrac{\pi}{\phi'(z)}\neq 0$ for any $z\in \mathcal S$. 
	
The formula for the Blaschke product easily follows from $B_f(z)=B_F(\phi(z))$. 
For the singular inner part, since $\phi:\partial \ss\to \T\setminus\{-1,1\}$ is a bijection, we have for 
$z\in \mathcal{S}$
\begin{align*}
S_f(z)&=S_F(\phi(z))\\
&=\exp\left(\nu_F(\{1\}) \frac{\phi(z)+1}{\phi(z)-1} + \nu_F(\{-1\})\frac{\phi(z)-1}{\phi(z)+1}  
+ \int_{\mathbb T\setminus\{1,-1\}}\dfrac{\phi(z)+e^{i\theta}}{\phi(z)-e^{i\theta}}~\d\nu_F(e^{i\theta})\right)\\
&=\exp\left(-\nu_F(\{1\}) e^{\frac{\pi}{2}z}-\nu_F(\{-1\}) e^{-\frac{\pi}{2}z}  
+ \int_{\mathbb \partial \mathcal S}\dfrac{\phi(z)+\phi(\zeta)}{\phi(z)-\phi(\zeta)}\,\d\mu_f(\zeta) \right),
\end{align*}	
where  $\mu_f={\phi^{-1}}_*\,\nu_F$ is the pushforward measure of $\nu_F$ on $\partial \mathcal S$. Since $\nu_F$ is 
a positive singular measure, $\nu_F(\{\pm1\})=0$ if $\nu_F$ has no mass at $1$, otherwise $\nu_F(\{\pm1\})>0$.
For the outer function, we also need to split the integral since $\phi(\partial\mathcal{S}\cap\C_\pm)=\T^*_\pm$. 
Hence, the outer function is given by
\begin{align*}
O_f(z)=O_F(\phi(z))&=\exp\Bigg(-\dfrac{1}{2\pi}\int_{0}^{\pi}\dfrac{\phi(z)+e^{i\theta}}{\phi(z)-e^{i\theta}}
\log|F(e^{i\theta})|\,\d\theta\\
&\qquad+\dfrac{1}{2\pi}\int_{-\pi}^0\dfrac{\phi(z)+e^{i\theta}}{\phi(z)-e^{i\theta}}\log|F(e^{i\theta})|
\,\d\theta\Bigg)
\end{align*}
for all $z\in\mathcal S$. By applying the substitutions $e^{i\theta}=\phi(x+i),~\theta\in\,]0,\pi[\,$ on 
$\mathbb T^*_+$ and $e^{i\theta}=\phi(x-i),~\theta\in\,]-\pi,0[\,$ on $\mathbb T^*_-$, we get \eqref{eq:O-f}.
\end{proof}

\begin{remark} Since $W^{1/2}(z)=1/(2\cosh(\frac{\pi}{4}z))$ for $z\in \mathcal{S}$, by $\eqref{inverse-phi}$, we get  
$W^{-1/2}(\phi^{-1}(w))={\sqrt{1-w^2}}/{2}, \quad w\in \D$, and so   $W^{-1/2}\circ\phi^{-1}$ is a bounded outer 
function on the disc. Let $f(z)=\exp\left(-e^{\frac{\pi}{2}z}\right)W^{-1/2}(z)\in H^2_\tau(\mathcal{S})$, we have
$((W^{1/2}f)\circ \phi^{-1})(z)= \exp\left(\frac{z+1}{z-1}\right)$, $z\in \D$ and so $(W^{1/2}f)\circ \phi^{-1}$ is 
the singular inner function in $H^2(\D)$ associated with the Dirac measure at $1$.
\end{remark}

\section{Phase Retrieval in $H_\tau^2(\mathcal{S})$}

\subsection{Reduction of the Problem}
In this section, we consider $f,g\in L^2(\mathbb R)$ with $\widehat{f},\widehat{g}\in L^2(\mathbb R, e^{2c|\xi|}\d\xi)$ such that 
$|f(x)|=|g(x)|$ for every $x\in\R$. Our goal is to determine, for a given $f$, all possible $g$'s.

To do so, let us write $f_c(x)=f(cx)$ and $g_c(x)=g(cx)$ so that $f_c,g_c\in L^2(\mathbb R)$ with $\widehat{f_c},\widehat{g_c}\in L^2(\mathbb R, e^{2|\xi|}\d\xi)$ and $|f_c(x)|=|g_c(x)|$ for every $x\in\R$ so that it is enough to consider the case $c=1$.

Note that $\widehat{f},\widehat{g}\in L^2(\R, e^{2|\xi|}\d\xi)$ if and only if $f,g\in H^2_\tau(\mathcal{S})$. Thus, $f$ and $g$ extend holomorphically to $\mathcal{S}$ and $|f(x)|=|g(x)|$ for every $x\in\R$ can be written as
\begin{equation}
	\label{eq:phase}
	f(x)\overline{f(\bar x)}=g(x)\overline{g(\bar x)},\qquad x\in \R.
\end{equation}
But now, \eqref{eq:phase} is an equality between two holomorphic functions on $\R$ so that it is valid also for all $x\in\mathcal{S}$. In other words, we are now trying to solve the following problem: given $f\in H^2_\tau(\mathcal{S})$, find all $g\in H^2_\tau(\mathcal{S})$ such that
\begin{equation}
	\label{eq:h2tau}
	f(z)f^*(z)=g(z)g^*(z),\qquad z\in \mathcal{S}.
\end{equation}

It turns out that this problem is easier to solve when transfering the problem to the disc. Multiplying by $W^{1/2}(z)$, $\overline{W^{1/2}(\bar z)}$ both sides of \eqref{eq:h2tau},
we obtain
$$
(W^{1/2}f)(z)\overline{(W^{1/2}f)(\bar{z})}=(W^{1/2}g)(z)\overline{(W^{1/2}g)(\bar{z})}
$$ 
for all $z\in\mathcal{S}$.
Observe that the functions $F=W^{1/2}f\circ \phi^{-1}$ and $G=W^{1/2}g\circ\phi^{-1}$ are in $H^2(\mathbb D)$. Hence, by applying the substitution $z=\phi^{-1}(w)$ and $\bar{z}=\phi^{-1}(\bar{w})$ to the previous equation, we get
\begin{equation}
\label{eq:h2d}
F(w)F^*(w)=G(w)G^*(w) ,\qquad w\in\D. 
\end{equation}
Therefore, we have translated the equality on the strip to an equivalent equality on the disk. Finally, we are now trying to solve the following problem on the disc: given $F\in H^2(\D)$, find all $G\in H^2(\D)$ such that \eqref{eq:h2d} holds for all $w\in\D$. Note that \eqref{eq:h2d} is equivalent to
$|F(w)|^2=|G(w)|^2$ for $w\in(-1,1)$.

\subsection{The Phase Retrieval Problem on the Disc}

In this section, we look at the equivalent phase retrieval problem on the disc.

Let $F\in H^2(\D)$ and write $F=B_FS_FO_F$ with $B_F,S_F,O_F$ given in equations \eqref{eq:B-F}, \eqref{eq:S-F} and \eqref{eq:O-F}, respectively. The factorization of $F^*$ is given by

$$
F^*=e^{i\lambda}B_{F^*}S_{F^*}O_{F^*}=e^{i\lambda}B_F^*S_F^*O_F^*.
$$

Since the factorization in $H^2(\D)$ is unique, we have $B_{F^*}=B_F^*,~S_{F^*}=S_F^*$, and $O_{F^*}=O_F^*$.
Hence, for all $w\in\D$, the Blaschke product formed from the zeros of $F^*$ is given by
\begin{equation}
	\label{eq:B-Fstar}
	 B_{F^*}(w)=B_F^*(w)=\prod_{\alpha\in Z(F)}b_{\bar{\alpha}}(w)=\prod_{\alpha\in \overline{Z(F)}}b_{\alpha}(w).
\end{equation}
The singular part of $F^*$ is given by
\begin{equation}
	\label{eq:S-Fstar}
	S_{F^*}(w)=S_F^*(w)=\exp\Big(\int_{\mathbb T}\dfrac{w+e^{i\theta}}{w-e^{i\theta}}~\d(C_*\nu_F)(e^{i\theta})\Big),
\end{equation}
for all $w\in\D$, where $C_*\nu_F$ is the pushforward measure of $\T$ by the conjugation function $C$. Finally, for all $w\in\D$, the outer part of $F^*$ is given by
\begin{equation}
	\label{eq:O-Fstar}
	O_{F^*}(w)=O_F^*(w)=\exp\Big(\dfrac{1}{2\pi}\int_{-\pi}^{\pi}\dfrac{e^{i\theta}+w}{e^{i\theta}-w}\log|F(e^{-i\theta})|~\d\theta\Big).
	\end{equation}
We use all of the facts above to prove the following lemma.
\begin{lemma}
	\label{lem:disc}
	Let $F, G\in H^2(\mathbb D)$. Then 
	\begin{equation*}
		|F(w)|^2=|G(w)|^2,\qquad  w\in (-1,1)
	\end{equation*}
	if and only if
	\begin{enumerate}
	\renewcommand{\theenumi}{\roman{enumi}}
		\item the zero sets of $F$ and $G$ satisfy
		$$
		Z(F)\cup \overline{Z(F)}=Z(G)\cup \overline{Z(G)};
		$$
		\item the singular measures $\nu_F$ and $\nu_G$, associated with $F$ and $G$ respectively, satisfy
		$$
		\nu_F+C_*\nu_F=\nu_G+C_*\nu_G \qquad\text {on } \T;
		$$
		\item the radial limits satisfy
		$$
		|F(e^{i\theta})F(e^{-i\theta})|=|G(e^{i\theta})G(e^{-i\theta}) | \qquad \text{ a.e. on }\mathbb T.$$
	\end{enumerate}
\end{lemma}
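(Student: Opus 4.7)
The plan is to reduce the equation $|F(w)|^2=|G(w)|^2$ on $(-1,1)$ to an equality between holomorphic functions on the whole disc and then read off the three conditions from the uniqueness of the inner–outer–Blaschke factorization in $H^1(\D)$.

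First, for real $w\in(-1,1)$ we have $\overline{F(w)}=\overline{F(\bar w)}=F^*(w)$, so $|F(w)|^2=F(w)F^*(w)$ and similarly for $G$. Since $F,F^*,G,G^*\in H^2(\D)$, both $FF^*$ and $GG^*$ are holomorphic on $\D$ (and lie in $H^1(\D)$ by Cauchy--Schwarz). The hypothesis therefore reads $FF^*=GG^*$ on the set $(-1,1)$, which has accumulation points in $\D$, so by analytic continuation $FF^*=GG^*$ on all of $\D$. The equivalence between the stated condition and this identity is the only nontrivial reduction I need; the rest is bookkeeping.

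Next, I would substitute the factorizations $F=e^{i\gamma_F}B_FS_FO_F$ and $F^*=e^{-i\gamma_F}B_F^*S_F^*O_F^*$ (and analogously for $G$) to obtain
\begin{equation*}
(B_FB_F^*)(S_FS_F^*)(O_FO_F^*)=(B_GB_G^*)(S_GS_G^*)(O_GO_G^*).
\end{equation*}
The key observation is that each grouped factor is itself of the correct type. Indeed $B_FB_F^*$ is a Blaschke product with zero set $Z(F)\cup\overline{Z(F)}$, valid because $\sum_{\alpha\in Z(F)}(1-|\alpha|)<\infty$ forces the same for $\overline{Z(F)}$. The function $S_FS_F^*$ is a singular inner function whose associated measure, using \eqref{eq:S-Fstar}, is $\nu_F+C_*\nu_F$, which is singular since the conjugation $C$ preserves singularity with respect to Lebesgue measure. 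Finally, $O_FO_F^*$ is outer (the logarithm of its boundary modulus is integrable since the logarithms add) with boundary modulus $|F(e^{i\theta})F(e^{-i\theta})|$ a.e., as seen from \eqref{eq:O-Fstar}.

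Applying the uniqueness of the inner–outer–Blaschke factorization to the element $FF^*=GG^*\in H^1(\D)$, the two groupings above must match factor by factor:
\begin{equation*}
B_FB_F^*=B_GB_G^*,\qquad S_FS_F^*=S_GS_G^*,\qquad O_FO_F^*=O_GO_G^*.
\end{equation*}
Equality of the Blaschke products gives $(\mathrm{i})$ at the level of zero sets counted with multiplicity; equality of the singular inner functions, together with the uniqueness of the representing measure, gives $(\mathrm{ii})$; equality of the outer parts forces equality of their boundary moduli, which is exactly $(\mathrm{iii})$. No unimodular ambiguity appears in the comparison because the product $FF^*$ is fixed on both sides, and because the ``self-conjugate'' functions $B_FB_F^*$, $S_FS_F^*$, $O_FO_F^*$ are all positive at $w=0$ (for the outer part, $O_F(0)O_F^*(0)=|O_F(0)|^2>0$).

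For the converse, assume $(\mathrm{i})$--$(\mathrm{iii})$. Condition $(\mathrm{i})$ yields $B_FB_F^*=B_GB_G^*$, $(\mathrm{ii})$ yields $S_FS_F^*=S_GS_G^*$ through the explicit formula \eqref{eq:S-Fstar}, and $(\mathrm{iii})$ gives that $O_FO_F^*$ and $O_GO_G^*$ are outer functions with the same boundary modulus, hence equal (again the potential unimodular constant is $1$ because both are real positive at $0$). Multiplying these three identities recovers $FF^*=GG^*$ on $\D$, and restricting to $(-1,1)$ gives $|F(w)|^2=|G(w)|^2$ there. The main technical obstacle is the verification that each grouped factor sits in the correct class so that the uniqueness of factorization can be invoked; once that is in hand the correspondence between the three conditions and the three factors is immediate.
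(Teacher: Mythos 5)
Your proposal is correct and follows essentially the same route as the paper: reduce $|F|^2=|G|^2$ on $(-1,1)$ to $FF^*=GG^*$ on $\D$ by analytic continuation, observe that $B_FB_F^*$, $S_FS_F^*$, $O_FO_F^*$ are again a Blaschke product, a singular inner function (with measure $\nu_F+C_*\nu_F$), and an outer function (with boundary modulus $|F(e^{i\theta})F(e^{-i\theta})|$), and conclude by uniqueness of the inner--outer--Blaschke factorization of $FF^*=GG^*\in H^1(\D)$. Your treatment of the converse and of the unimodular constant (positivity at $0$, and the cancellation $e^{i\gamma}e^{-i\gamma}=1$ in $FF^*$) is in fact slightly more explicit than the paper's, which leaves those points implicit.
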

\begin{proof}
	Let $F, G\in H^2(\mathbb D)$. Note that $FF^*$ and $GG^*$ have decompositions given by
	
	$$
	FF^*=B_FB_{F^*}S_FS_{F^*}O_FO_{F^*}\qquad \text{ and }\qquad\, GG^*=B_GB_{G^*}S_GS_{G^*}O_GO_{G^*}.
	$$
	
	Notice that $B_FB_{F^*}$ is again a Blaschke product, $S_FS_{F^*}$ is again a singular inner function, and $O_FO_{F^*}$ is again an outer function. Indeed, for all $w\in\mathbb D$, \eqref{eq:B-Fstar} implies that
	
	$$
	B_F(w)B_{F^*}(w)=\prod_{ \alpha\in Z(F)\cup \overline{Z(F)}}b_\alpha(w),
	$$
	while \eqref{eq:S-Fstar} implies that
	$$
	S_F(w)S_{F^*}(w)=\exp\Big(\dfrac{1}{2\pi}\int_{\mathbb T}\dfrac{w+e^{i\theta}}{w-e^{i\theta}}~\d\Big(\nu_F+C_*\nu_F\Big)(e^{i\theta})\Big),
	$$
	and finally,  \eqref{eq:O-Fstar} implies that
	$$
	O_F(w)O_{F^*}(w)=\exp\Big(\dfrac{1}{2\pi}\int_{-\pi}^{\pi}\dfrac{e^{i\theta}+w}{e^{i\theta}-w}\log|F(e^{i\theta})F(e^{-i\theta})|\,\d\theta\Big).
	$$
	\par Thus, writing the same for $GG^*$ and using the uniqueness of the decomposition,
	$FF^*=GG^*$
	implies that $B_FB_{F^*}=B_GB_{G^*}$, which in turn implies that
	$$
	Z(F)\cup\overline{Z(F)}=Z(G)\cup\overline{Z(G)}.
	$$
	Furthermore, $FF^*=GG^*$ also implies that
	$$
	S_FS_{F^*}=S_GS_{G^*}\qquad\text{ and }\qquad
	O_FO_{F^*}=O_GO_{G^*}.
	$$ 
	Thus,  
	$$
	\nu_F+C_*\nu_F=\nu_G+C_*\nu_G
	$$
	on $\T$, and by Fatou's theorem \cite[Theorem 2.2]{Du}, we have for almost every $\theta\in\R$
	\begin{align*}
	\lim_{r\rightarrow 1}(O_F(re^{i\theta})O_{F^*}(re^{i\theta}))&=\lim_{r\rightarrow 1}(O_G(re^{i\theta})O_{G^*}(re^{i\theta})),\\
	\intertext{which in turn implies that}
	|F(e^{i\theta})F(e^{-i\theta})|&=|G(e^{i\theta})G(e^{-i\theta})|
	\end{align*}
	almost everywhere on $\mathbb T$.
\end{proof}
We can now construct such $G$'s to solve the equivalent phase retrieval problem on the disc. 
Let $\mathcal{N}^+$ denote the Smirnov class, namely those functions holomorphic on $\D$  of the form 
$f=g/h$, where $g,h$ are bounded analytic functions on $\D$ 
 such that  $h$ is an outer function. If 
$g$ is outer, then $f$ is also outer.
 Note that if $f\in \mathcal N^+$, then by Fatou's Theorem \cite[Theorem 1.3]{Du}, the radial limit $f_*$  exists almost everywhere on $\mathbb T$ and $\log |f_*|\in L^1(\T)$.
The following corollary immediately follows from Lemma \ref{lem:disc}.
\begin{corollary}
	\label{cor:discsoln}
	Let $F,G\in H^2(\D)$. Then $|F|=|G|$ on $(-1,1)$ if and only if the inner-outer decomposition of $F$ and $G$ are given by
	$$
	F=e^{i\gamma}B_FS_FO_F\qquad \text{ and }\qquad G=e^{i\gamma'}B_GS_GO_G
	$$
	where
	\begin{enumerate}
		\item $B_F, S_F, O_F$ are given by \eqref{eq:B-F}, \eqref{eq:S-F}, \eqref{eq:O-F} respectively;
		\item $B_G$ is the Blaschke product associated with the set $A\cup(\overline{Z(F)\backslash A})$ for some $A\subset Z(F)$;
		\item $S_G$ is the singular inner function associated with the positive singular measure $\nu_G=\nu_F+\rho$, where $\rho$ is an odd real singular measure; and
		\item $O_G=UO_F$ where $U\in \mathcal N^+$ is an outer function and $U=1/U^*$ on $\D$.
	\end{enumerate}
\end{corollary}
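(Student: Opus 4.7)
The corollary is essentially a repackaging of Lemma \ref{lem:disc}: the plan is to translate each of the three conditions (i)--(iii) of that lemma into a statement about the factors $B$, $S$, $O$ of the inner-outer-Blaschke decomposition.

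Assume first that $|F|=|G|$ on $(-1,1)$, so (i)--(iii) of Lemma \ref{lem:disc} hold. The zero-flipping statement (2) is obtained from (i) by setting $A := Z(F)\cap Z(G)$ as a multiset (taking the minimum of multiplicities): a short multiplicity bookkeeping, using the identity $m_F(\alpha)+m_F(\bar\alpha)=m_G(\alpha)+m_G(\bar\alpha)$ provided by (i), yields $Z(G)\setminus A = \overline{Z(F)\setminus A}$, so $Z(G) = A\cup \overline{Z(F)\setminus A}$. For (3), setting $\rho := \nu_G - \nu_F$ makes $\rho$ a real singular signed measure, and (ii) rewrites as $C_*\rho = -\rho$, i.e., $\rho$ is odd under conjugation.

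For (4), define $U := O_G/O_F$. As a ratio of two outer $H^2$ functions, $U$ lies in the Smirnov class $\mathcal{N}^+$ and is outer. Since $|O_F| = |F|$ and $|O_G|=|G|$ a.e.\ on $\T$, condition (iii) translates to $|U(e^{i\theta})U^*(e^{i\theta})|=|U(e^{i\theta})||U(e^{-i\theta})|\equiv 1$ a.e.\ on $\T$. Now $UU^*$ is outer in $\mathcal{N}^+$ with unimodular boundary values, hence a unimodular constant $c$. Evaluating at $w=0$, the formula \eqref{eq:O-F} forces $O_F(0),O_G(0)>0$, so $U(0)>0$; thus $c = U(0)U^*(0) = U(0)^2 > 0$, and $|c|=1$ forces $c=1$. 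Therefore $U = 1/U^*$ on $\D$.

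The converse is a direct verification of (i)--(iii) from (1)--(4). Item (2) gives $Z(G)\cup \overline{Z(G)} = A\cup\overline{Z(F)\setminus A}\cup\overline A\cup(Z(F)\setminus A) = Z(F)\cup \overline{Z(F)}$, so (i) holds. Item (3) with $C_*\rho=-\rho$ gives $\nu_G + C_*\nu_G = (\nu_F+\rho)+(C_*\nu_F-\rho) = \nu_F + C_*\nu_F$, so (ii) holds. Item (4) yields $|G|=|U||F|$ a.e.\ on $\T$ together with $|U(e^{i\theta})||U(e^{-i\theta})| = |UU^*(e^{i\theta})|\equiv 1$ (from $UU^*=1$ on $\D$), giving (iii). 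Lemma \ref{lem:disc} then concludes $|F|=|G|$ on $(-1,1)$. The only mild obstacle in this plan is pinning down the unimodular constant $c$ in $UU^* = c$ to be exactly $1$; this relies on the positivity normalization $O_F(0),O_G(0) > 0$ built into \eqref{eq:O-F}, with any residual phase discrepancy absorbed into the unimodular constant $e^{i\gamma'}$ in the factorization of $G$.
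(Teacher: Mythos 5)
Your proof is correct and follows essentially the same route as the paper: both deduce the corollary from Lemma \ref{lem:disc} together with uniqueness of the inner--outer factorization, and both handle the outer part by setting $U=O_G/O_F\in\mathcal N^+$ and deriving $UU^*=1$ on $\D$ from condition (iii). You merely fill in details the paper leaves implicit (the multiplicity bookkeeping for the zero sets, the explicit converse verification, and pinning the constant in $UU^*=c$ to $1$ via the normalization $O_F(0),O_G(0)>0$), which is a welcome refinement rather than a different approach.
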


\begin{remark} We can make the condition on $\rho$ a bit more explicit so as to be constructive. We write $\rho=\rho_+-\rho_-$, where $\rho_+$ is the positive part while $\rho_-$ is the negative part. In particular, $\rho_+$ and $\rho_-$ have disjoint supports. The fact that $\rho$ is an odd measure, $C_*\rho=-\rho$  is equivalent to $\rho_-=C_*\rho_+$. The fact that $\nu_G$ is positive, is equivalent to the condition $C_*\rho_+\leq \nu_F$, or equivalently, $\rho_+\leq C_*\nu_F$. Conversely, take a set $E\subset\T$ such that $E\cap\overline{E}=\emptyset$ and a positive singular measure $\rho_+$ supported in $E$ and such that $\rho_+\leq C_*\nu_F$. Then we can take $\rho=\rho_+-C_*\rho_+$.
\end{remark}

\begin{proof}[Proof of Corollary \ref{cor:discsoln}]
	Let $F,G\in H^2(\D)$ with inner-outer decompositions as defined on Corollary \ref{cor:discsoln}. Observe that the properties of the Blaschke product $B_G$ and the singular inner function $S_G$ immediately follow from Lemma \ref{lem:disc}. For the outer function, by Lemma \ref{lem:disc}, we have
	\begin{equation}
	\label{eq:disc-fac}
	|O_F(e^{i\theta})O_F(e^{-i\theta})|=|O_G(e^{i\theta})O_G(e^{-i\theta})|
	\end{equation}
	almost everywhere on $\T$. Hence, 
	$$
	\log|O_G(e^{i\theta})|=\log|O_F(e^{i\theta})|+\log|U(e^{i\theta})|
	$$
	almost everywhere on $\T$, where $\log|U(e^{i\theta})|$ is an odd real-valued function of $\theta$ and $\log|U|\in L^1(\T)$.   Since $|O_G(e^{i\theta})|=|O_F(e^{i\theta})U(e^{i\theta})|$ almost everywhere on $\T$ and $O_G$ and $O_F$ are outer functions, we get 
	$$O_G(z)=O_F(z)O_U(z),\qquad z\in \D.$$
	Hence $ O_U=O_G/O_F\in \mathcal N^+$.  
	  Moreover, \eqref{eq:disc-fac} implies that 
	$|O_U(e^{i\theta})O_U(e^{-i\theta})|=1$ almost everywhere on $\T$, and so $O_U(z)O_U^*(z)=1$ on $\D$.
	\end{proof}
	
We can actually identify the solutions of the phase retrieval problem on the disc in terms of a 
factorization. Let us consider an analog of the result of McDonald \cite[Proposition 1]{Mc}.

\begin{corollary}
	\label{cor:uv-disc}
	Let $F,G\in H^2(\D)$. Then $|F|=|G|$ on $(-1,1)$ if and only if there exist $u,v\in \mathrm{Hol}(\D)$ such that $F=uv$ and $G=uv^*$.
\end{corollary}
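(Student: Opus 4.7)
The sufficiency direction will be immediate: if $F = uv$ and $G = uv^*$ with $u, v \in \hol(\D)$, then on $(-1,1)$ we have $\bar w = w$, hence $v^*(w) = \overline{v(w)}$ and $|G(w)| = |u(w)v(w)| = |F(w)|$. For necessity, my plan is to construct $u$ and $v$ factor-by-factor by invoking Corollary \ref{cor:discsoln}. Write $F = e^{i\gamma}B_F S_F O_F$ and $G = e^{i\gamma'}B_G S_G O_G$, and let $A \subset Z(F)$, the odd measure $\rho_+ - C_*\rho_+$, and the outer function $U$ be the data supplied by that corollary, so that $Z(G) = A \cup \overline{Z(F)\setminus A}$, $\nu_G = \nu_F + \rho_+ - C_*\rho_+$ with $C_*\rho_+ \leq \nu_F$, and $O_G = UO_F$ with $UU^* = 1$.

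For the Blaschke pieces, denote by $B_A, B_{A'}$ the Blaschke products with zero sets $A$ and $A' := Z(F)\setminus A$. Then $B_F = B_A B_{A'}$, and since the zero set of $B_{A'}^*$ is exactly $\overline{A'}$, one gets $B_G = B_A B_{A'}^*$. For the singular-inner pieces, define $\mu_u := \nu_F - C_*\rho_+$ and $\mu_v := C_*\rho_+$, both positive singular measures by the remark following the corollary. A direct check gives $\mu_u + \mu_v = \nu_F$ and $\mu_u + C_*\mu_v = \nu_F + \rho_+ - C_*\rho_+ = \nu_G$, so the associated singular inner functions satisfy $S_F = S_{\mu_u}S_{\mu_v}$ and $S_G = S_{\mu_u}S_{\mu_v}^*$.

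The outer step is the most delicate. Because $UU^* = 1$ on $\D$ forces $|U(e^{i\theta})||U(e^{-i\theta})| = 1$ almost everywhere, $\log|U|$ is odd on $\T$, and since $U$ is outer it lies in $L^1(\T)$; in particular $\tfrac12\log|U| \in L^1(\T)$ as well. I will define $O_v$ to be the outer function with boundary modulus $|U|^{-1/2}$, and set $O_u := O_F/O_v \in \hol(\D)$ (well-defined since $O_v$ is zero-free on $\D$). Then $O_u O_v = O_F$ by construction, while $O_v^*/O_v$ is an outer function with the same boundary modulus as $U$, so $O_v^*/O_v = cU$ for some unimodular constant $c$; hence $O_u O_v^* = c\,O_G$.

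Assembling the three pairs and choosing unimodular constants $e^{i\lambda}, e^{i\mu}$ subject to $\lambda + \mu \equiv \gamma$ and $\lambda - \mu \equiv \gamma' - \arg c \pmod{2\pi}$ (a $2\times 2$ real linear system that is always solvable), the functions $u := e^{i\lambda}B_A S_{\mu_u} O_u$ and $v := e^{i\mu}B_{A'}S_{\mu_v}O_v$ will satisfy $uv = F$ and $uv^* = G$. The main obstacle is the outer-function construction together with the bookkeeping of the residual unimodular constant $c$; the Blaschke and singular-inner pieces drop out essentially for free from the combinatorial content of Corollary \ref{cor:discsoln}.
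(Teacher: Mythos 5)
Your proposal is correct and follows essentially the same route as the paper: apply Corollary \ref{cor:discsoln} and split the Blaschke, singular, and outer factors, using a square root of the outer function $U$ for the outer part. Your variants (splitting the singular measure as $\nu_F-C_*\rho_+$ and $C_*\rho_+$, realizing $U^{1/2}$ as the outer function with boundary modulus $|U|^{1/2}$, and tracking the resulting unimodular constants explicitly) are only cosmetic refinements of the paper's argument, and if anything slightly tidier on the constants.
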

\begin{proof}
	Let $F,G\in H^2(\D)$. By Corollary \ref{cor:discsoln}, we have the factorizations
	$$
	F=B_FS_FO_F\qquad \text{ and }\qquad G=B_FS_FO_F.
	$$
	First, observe that we can write the Blaschke products $B_F$ and $B_G$ as
	$$
	B_F=B_1B_2\qquad \text{ and }\qquad B_G=B_1B_2^*
	$$
	where
	$B_1$ is the Blaschke product associated with $A\subset Z(F)$ and $B_2$ is the Blaschke product associated with $Z(F)\backslash A$. On the other hand, we can write the singular measures $\nu_F$ and $\nu_G$ as 
	$$
	\nu_F=\nu_1+\nu_2\qquad \text{ and }\qquad\nu_G=\nu_1+C_*\nu_2
	$$
	where 
	$$\nu_1=\nu_F+\dfrac{\rho_+-C_*\rho_+}{2}\qquad \text{ and }\qquad \nu_2=\dfrac{C_*\rho_+-\rho_+}{2},
	$$
	so that $S_F=S_{\nu_1}S_{\nu_2}$ and $S_G=S_{\nu_1}S_{\nu_2}^*$. 
	
	Since $O_G=UO_F$  where $U$ is an outer function, we have $U\in \mathcal N^+$ and $UU^*=1$ on $\D$. We write
	$$
	O_F=O_FU^{1/2}U^{-1/2}
	$$
	and
	$$
	O_G=O_FU^{1/2}U^{1/2}=O_FU^{1/2}(U^{-1/2})^*
	$$
	Therefore, we take 
	$$
	u=B_1S_{\nu_1}O_FU^{1/2}\qquad \text{ and }\qquad v=B_2S_{\nu_2}U^{-1/2}.
	$$
\end{proof}

\subsection{Back to the Strip}
In this section, we go back to the phase retrieval problem on the strip. Using Lemma \ref{lem:fact-strip}, we see that Lemma \ref{lem:disc} translates to functions on $H^2_{\tau}(\mathcal S)$. By a change of variable and by applying the inner-outer factorization on $H^2_\tau(\mathcal{S})$, we have:
\begin{lemma} 
\label{lem:strip}	
Let $f,g\in H^2_{\tau}(\mathcal S)$. Then 
\begin{equation*}
|f(z)|^2=|g(z)|^2,\qquad z\in\R
\end{equation*}
if and only if
\begin{enumerate}
\renewcommand{\theenumi}{\roman{enumi}}
	\item the zero sets of $f$ and $g$ satisfy
	$$
	Z(f)\cup \overline{Z(f)}=Z(g)\cup \overline{Z(g)};
	$$
	\item the singular measures $\mu_f$ and $\mu_g$, associated with $f$ and $g$ respectively, satisfy
	$$
	\mu_f+C_*\mu_f=\mu_g+C_*\mu_g \qquad \text { on }\partial\mathcal S;
	$$ 
	\item the boundary values satisfy
	$$
	|f(x+i)f(x-i)|=|g(x+i)g(x-i)| \qquad \text{ a.e. on } \R.$$
\end{enumerate}
\end{lemma}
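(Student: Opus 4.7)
The plan is to transfer the problem to the disc via the conformal map $\phi:\mathcal{S}\to\D$ and reduce to Lemma \ref{lem:disc}. Given $f,g\in H^2_\tau(\mathcal{S})$, I would set $F=(W^{1/2}f)\circ\phi^{-1}$ and $G=(W^{1/2}g)\circ\phi^{-1}$; both lie in $H^2(\D)$ by the isometric identification recalled just before Lemma \ref{lem:fact-strip}. Since $\phi$ sends $\R$ bijectively onto $(-1,1)$ and $W^{1/2}(x)=2\cosh(\pi x/4)>0$ for $x\in\R$, the identity $|f|^2=|g|^2$ on $\R$ is equivalent to $|F|^2=|G|^2$ on $(-1,1)$. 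Applying Lemma \ref{lem:disc} to $F$ and $G$ then produces three equivalent conditions on the disc side, which I translate to the strip using the factorization encoded in Lemma \ref{lem:fact-strip}.

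For (i), $W^{1/2}$ does not vanish on $\mathcal{S}$, so $Z(F)=\phi(Z(f))$; rewriting $\phi^*=\phi$ as $\phi\circ C=C\circ\phi$ gives $\overline{Z(F)}=\phi(\overline{Z(f)})$, and injectivity of $\phi$ converts the disc zero-set condition into (i). For (ii), by definition $\mu_f=\phi^{-1}_*\nu_F$, and the same intertwining $\phi\circ C=C\circ\phi$ yields $C_*\mu_f=\phi^{-1}_*(C_*\nu_F)$; since $\phi^{-1}:\T\setminus\{\pm 1\}\to\partial\mathcal{S}$ is a measurable bijection, pushing forward $\nu_F+C_*\nu_F=\nu_G+C_*\nu_G$ gives the required identity on $\partial\mathcal{S}$. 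For (iii), I parametrize $e^{i\theta}\in\T^*_+$ as $\phi(x+i)$ for $x\in\R$, so that $\phi^*=\phi$ also gives $e^{-i\theta}=\phi(x-i)$. Taking radial boundary values in $F=(W^{1/2}f)\circ\phi^{-1}$,
$$
|F(e^{i\theta})F(e^{-i\theta})|=|W(x+i)|^{1/2}|W(x-i)|^{1/2}\,|f(x+i)f(x-i)|,
$$
with the analogous identity for $G$. Since $W^*=W$ and $W$ has no zeros on $\overline{\mathcal{S}}$, the $W$-factors coincide on the $F$- and $G$-sides and are nonzero almost everywhere, so the disc boundary identity collapses to (iii).

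The only piece of bookkeeping I expect to require care is the compatibility of $\phi$ with the two boundary conjugations: one must verify that $\phi$ maps $\partial\mathcal{S}\cap\C_\pm$ onto $\T^*_\pm$ and that complex conjugation on $\T$ corresponds to the reflection $z\mapsto\bar z$ on $\partial\mathcal{S}$, so that the pushforward intertwines $C$ on both sides. Once this is in hand, the entire argument is a mechanical unwinding of Lemmas \ref{lem:fact-strip} and \ref{lem:disc}, with no further analytic input needed.
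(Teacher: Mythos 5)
Your argument is essentially the paper's own: the paper offers no proof of Lemma \ref{lem:strip} beyond the remark that Lemma \ref{lem:disc} translates to the strip by the change of variable encoded in Lemma \ref{lem:fact-strip}, and your unwinding of the three conditions through $F=(W^{1/2}f)\circ\phi^{-1}$, $G=(W^{1/2}g)\circ\phi^{-1}$ (positivity of $W$ on $\R$, the intertwining $\phi\circ C=C\circ\phi$, and $\phi(\partial\mathcal{S}\cap\C_\pm)=\T^*_\pm$) is exactly that intended translation, carried out correctly in the forward direction. One caveat, concerning the ``if'' direction and inherited from the statement itself rather than introduced by you: the atoms of $\nu_F$ at $\pm 1$, i.e.\ the constants $a_{\{\pm 1\}}$ in \eqref{eq:S-f}, are invisible to $\mu_f={\phi^{-1}}_*\nu_F$ on $\partial\mathcal{S}$, so pushing the strip condition (ii) back to the disc does not recover $\nu_F+C_*\nu_F=\nu_G+C_*\nu_G$ at the two points $\pm 1$; for instance $f=W^{-1/2}$ and $g=W^{-1/2}\exp\bigl(-e^{\frac{\pi}{2}z}\bigr)$ (the function of the paper's own remark after Lemma \ref{lem:fact-strip}) satisfy (i)--(iii) yet $|f|\neq|g|$ on $\R$. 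So your ``mechanical unwinding'' is complete only if the singular data on the strip are understood to include the masses $a_{\{\pm 1\}}$ (equivalently, one adds the condition $a_{\{\pm 1\},f}=a_{\{\pm 1\},g}$), and your pushforward step for the converse should record this explicitly.
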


We now construct the solutions of the problem on the strip.  
Let $\mathcal N^+_\tau(\mathcal{S})$ be the Smirnov class of holomorphic functions in $\mathcal{S}$ such that $f=(F\circ\phi)/W^{1/2}$ where $F\in \mathcal N^+$. The following result immediately follows from Lemma \ref{lem:strip}.
\begin{theorem}
	\label{thm:stripsoln}
	Let $f,g\in H^2_\tau(\mathcal{S})$. Then $|f|=|g|$ on $\R$ if and only if the inner-outer decomposition of $f$ and $g$ are given by
	$$
	f=e^{i\gamma}W^{-1/2}B_fS_fO_f \qquad \text{ and }\qquad g=e^{i\gamma'}W^{-1/2}B_gS_gO_g
	$$
	 where
	\begin{enumerate}
		\item $B_f,S_f,O_f$ are given by \eqref{eq:B-f}, \eqref{eq:S-f}, \eqref{eq:O-f} respectively;
		\item $B_g$ is the Blaschke product associated with the set $A\cup(\overline{Z(f)\backslash A})$ with $A\subset Z(f)$;
		\item $S_g$ is the singular inner function associated with the positive singular measure $\mu_g=\mu_f+\sigma$, where $\sigma$ is an odd real singular measure, given by $\sigma=\sigma_+-C_*\sigma_+$, satisfying $C_*\sigma=-\sigma$ and $0\leq\sigma_+\leq C_*\mu_f$; and
		\item $O_g$ is the outer part of $uO_f$ where $u\in \mathcal N^+_\tau(\mathcal{S})$ is an outer  function  and $u=1/u^*$  on $\mathcal{S}$.
	\end{enumerate}
\end{theorem}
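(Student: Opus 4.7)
The plan is to pull the problem back to the disc via the conformal bijection $\phi:\mathcal{S}\to\D$, apply Corollary \ref{cor:discsoln}, and then push each piece of the resulting factorization forward to the strip by means of Lemma \ref{lem:fact-strip}. Setting $F=(W^{1/2}f)\circ\phi^{-1}$ and $G=(W^{1/2}g)\circ\phi^{-1}$, both in $H^2(\D)$, the positivity of $W^{1/2}$ on $\R$ together with $\phi(\R)=(-1,1)$ makes the hypothesis $|f(x)|=|g(x)|$ for $x\in\R$ equivalent to $|F(w)|=|G(w)|$ for $w\in(-1,1)$. Corollary \ref{cor:discsoln} then describes $G$ in terms of $F$, a subset $A'\subset Z(F)$, an odd real singular measure $\rho$ with $\rho_+\leq C_*\nu_F$, and an outer $U\in\mathcal{N}^+$ with $UU^*=1$; what remains is to translate these ingredients into the strip data.

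For item (2), I would use that $\phi^*=\phi$, so $\phi$ intertwines the conjugations on $\mathcal{S}$ and $\D$ and sends $Z(f)$ bijectively onto $Z(F)$. Choosing $A=\phi^{-1}(A')$ and combining with the formula $B_f=B_F\circ\phi$ from Lemma \ref{lem:fact-strip} delivers the Blaschke condition. For item (3), Lemma \ref{lem:fact-strip} identifies $\mu_f=\phi^{-1}_*\nu_F$, and since the pushforward is additive the decomposition $\nu_G=\nu_F+\rho$ becomes $\mu_g=\mu_f+\sigma$ on $\partial\mathcal{S}$ with $\sigma=\phi^{-1}_*\rho$. Singularity is preserved because $\phi^{-1}:\T\setminus\{\pm 1\}\to\partial\mathcal{S}$ is a smooth diffeomorphism, and the identity $\phi^{-1}_*\circ C_*=C_*\circ\phi^{-1}_*$ (again a consequence of $\phi^*=\phi$) transports the oddness and the inequality from $\rho$ to $\sigma$, giving $C_*\sigma=-\sigma$ and $\sigma_+\leq C_*\mu_f$.

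For item (4), I would set $u=U\circ\phi$. Then $uu^*=(UU^*)\circ\phi=1$ on $\mathcal{S}$ and $u$ is outer; membership in $\mathcal{N}^+_\tau(\mathcal{S})$ requires writing $U=p/q$ with $p,q\in H^\infty(\D)$ and $q$ outer, and then using the identification between $\mathcal{N}^+_\tau(\mathcal{S})$ and $\mathcal{N}^+$ (through $F\mapsto (W^{1/2}f)\circ\phi^{-1}$) to produce the required representative of $u$. Once this is in place, the relation $O_f=O_F\circ\phi$ from Lemma \ref{lem:fact-strip} yields $O_g=O_G\circ\phi=(UO_F)\circ\phi=uO_f$, and since the product of two outer functions is outer, $uO_f$ is its own outer part.

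The main obstacle I anticipate is the Smirnov-class bookkeeping required by item (4): $U$ need not belong to any $H^p(\D)$, so carefully tracking $u=U\circ\phi$ through the definition of $\mathcal{N}^+_\tau(\mathcal{S})$, and confirming that the product $uO_f$ lies where it should on the strip, is the delicate step. Everything else is essentially a dictionary translation between the two domains, controlled by the single identity $\phi^*=\phi$ and the pushforward relation $\mu_f=\phi^{-1}_*\nu_F$.
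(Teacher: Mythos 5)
Your proposal is correct and follows essentially the same route as the paper: the paper obtains Theorem \ref{thm:stripsoln} by transferring the disc results through the conformal map $\phi$ and the factorization of Lemma \ref{lem:fact-strip} (via Lemma \ref{lem:strip}), which is exactly your dictionary argument, merely traversed through Corollary \ref{cor:discsoln} instead of the strip version of Lemma \ref{lem:disc}. Your Smirnov-class bookkeeping for $u$ also matches the paper's definition of $\mathcal N^+_\tau(\mathcal{S})$, since $W^{1/2}\circ\phi^{-1}$ is outer, so no genuine gap remains.
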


\begin{remark}
	Observe that possible trivial solutions to the problem on the strip are given by:
$$
(1)\quad g(z)=ce^{i\eta z}f(z)\qquad\mbox{and}\qquad
(2)\quad g(z)=ce^{i\eta z}f^*(z)
$$
with $|c|=1$ and $\eta\in\R$. These trivial solutions are retrieved as follows:

\begin{enumerate}

\item the factor $e^{i\eta z}$ is the factor $u$ of the outer part as $e^{i\eta z}(e^{i\eta z})^*=1$

\item the replacement of $f$ by $f^*$ is obtained by taking $A=\emptyset$ for the Blaschke part, $\sigma=C_*\mu_f-\mu_f$ so that $\mu_g=C_*\mu_f$ for the inner part and finally $u=O_{f^*}/O_f$ so that the outer part of $g$ is $O_g=uO_f=O_{f^*}$.
\end{enumerate}
\end{remark}

Corollary \ref{cor:uv-disc} also translates to a result on the strip with a simple change of variable.
\begin{corollary}
	\label{cor:uv-strip}
	Let $f,g\in H^2_\tau(\mathcal{S})$. Then $|f|=|g|$ on $\R$ if and only if there exist $u,v\in \mathrm{Hol}(\mathcal{S})$ such that $f=uv$ and $g=uv^*$.
\end{corollary}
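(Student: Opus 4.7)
The plan is to transfer the statement to the disc via the conformal bijection $\phi$ and then quote Corollary \ref{cor:uv-disc}. Concretely, for $f,g\in H^2_\tau(\mathcal S)$ set $F=W^{1/2}(f\circ\phi^{-1})$ and $G=W^{1/2}(g\circ\phi^{-1})$, which are elements of $H^2(\D)$. The reduction carried out at the beginning of Section 3 shows that $|f(x)|=|g(x)|$ for all $x\in\R$ is equivalent to $|F(w)|=|G(w)|$ for all $w\in(-1,1)$, so Corollary \ref{cor:uv-disc} supplies $U,V\in\hol(\D)$ with $F=UV$ and $G=UV^*$.

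To come back to the strip, the natural choice is
\begin{equation*}
u(z):=\frac{U(\phi(z))}{W(z)^{1/2}},\qquad v(z):=V(\phi(z)),\qquad z\in\mathcal S.
\end{equation*}
Both functions lie in $\hol(\mathcal S)$: $\phi$ is holomorphic on $\mathcal S$, and $W^{1/2}(z)=2\cosh(\tfrac{\pi}{4}z)$ is a zero-free holomorphic function on $\mathcal S$ (its zeros $z=2i+4ki$ lie outside the strip), so $W^{-1/2}$ is holomorphic there. Then $uv=W^{-1/2}\,(UV)\circ\phi=W^{-1/2}\,F\circ\phi=f$, and since $\phi^*=\phi$ we have the key identity
\begin{equation*}
v^*(z)=\overline{v(\bar z)}=\overline{V(\phi(\bar z))}=\overline{V(\overline{\phi(z)})}=V^*(\phi(z)),
\end{equation*}
so that $uv^*=W^{-1/2}\,(UV^*)\circ\phi=W^{-1/2}\,G\circ\phi=g$, as required.

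The converse is immediate and requires no disc machinery: if $f=uv$ and $g=uv^*$ with $u,v\in\hol(\mathcal S)$, then for $x\in\R$ we have $\bar x=x$, so $v^*(x)=\overline{v(x)}$ and in particular $|v^*(x)|=|v(x)|$, yielding $|f(x)|=|u(x)||v(x)|=|u(x)||v^*(x)|=|g(x)|$. No step here looks delicate; the only point worth double-checking is that the $*$ operation commutes with composition by $\phi$ (which uses $\phi^*=\phi$) and that $W^{-1/2}$ is a legitimate holomorphic factor on the whole strip. Once those are in hand the proof is just two lines on each direction.
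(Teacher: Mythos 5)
Your proof is correct and is exactly the ``simple change of variable'' the paper itself invokes (the paper gives no further detail for this corollary): transfer to the disc via $\phi$, apply Corollary \ref{cor:uv-disc}, and pull back with $u=W^{-1/2}\,(U\circ\phi)$, $v=V\circ\phi$, using $\phi^*=\phi$ and the zero-freeness of $W^{1/2}$ on $\mathcal S$. No gaps; your argument simply makes explicit what the paper leaves implicit.
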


Finally, we go back to our initial phase retrieval problem. The following result directly follows from Theorem \ref{thm:stripsoln}.
\begin{corollary}\label{maincoro}
	Let $f\in L^2(\mathbb R)$ and $\widehat{f}\in L^2(\mathbb R, e^{2c|\xi|}d\xi)$. For all $g\in L^2(\mathbb R)$ such that $\widehat{g}\in L^2(\mathbb R, e^{2c|\xi|}d\xi)$ with $|f(x)|=|g(x)|$ for all $x\in\R$, $g$ can be written as
	$$
	g=e^{i\kappa}W^{-1/2}B_gS_gO_g
	$$
	where
	\begin{enumerate}
		\item $B_g$ is the Blaschke product associated with the set $A\cup(\overline{Z(f)\backslash A})$ with $A\subset Z(f)$;
		\item $S_g$ is the singular inner function associated with the positive singular measure $\mu_g=\mu_f+\sigma$, where $\sigma$ is an odd real singular measure, given by $\sigma=\sigma_+-C_*\sigma_+$, satisfying $C_*\sigma=-\sigma$ and $0\leq\sigma_+\leq C_*\mu_f$; and
		\item  $O_g$ is the outer part of $uO_f$ where $u\in \mathcal N^+_\tau(\mathcal{S})$ is an outer  function and $u=1/u^*$  on $\mathcal{S}$.
	\end{enumerate}
\end{corollary}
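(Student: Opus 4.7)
The plan is to reduce the statement to a direct application of Theorem \ref{thm:stripsoln}, since the authors have already done the real work (the characterization of the weighted $L^2$-space in terms of $H^2_\tau(\mathcal{S})$, the transfer to the disc via $\phi$, and the inner-outer factorization analysis). The corollary is essentially a repackaging of that theorem in the original language of the problem statement.

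First, I would dispose of the parameter $c$ by the rescaling already recorded in Section 3.1: set $f_c(x)=f(cx)$ and $g_c(x)=g(cx)$. A change of variable on the Fourier side shows that the hypotheses $\widehat f,\widehat g\in L^2(\R,e^{2c|\xi|}\,\d\xi)$ are equivalent to $\widehat{f_c},\widehat{g_c}\in L^2(\R,e^{2|\xi|}\,\d\xi)$, and the pointwise identity $|f|=|g|$ on $\R$ passes unchanged to $|f_c|=|g_c|$ on $\R$. Thus it suffices to prove the claim for $c=1$.

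Next, invoking the identification recalled in Section 2.3 (from \cite[Chapter VI, Section 7.1]{Ka}), the condition $\widehat{f_c},\widehat{g_c}\in L^2(\R,e^{2|\xi|}\,\d\xi)$ is exactly $f_c,g_c\in H^2_\tau(\mathcal{S})$. In particular both functions extend holomorphically to $\mathcal{S}$ and, as already noted in \eqref{eq:h2tau}, the equality $|f_c|=|g_c|$ on $\R$ extends by analytic continuation to $f_c f_c^{*}=g_c g_c^{*}$ on $\mathcal{S}$. At this point the hypotheses of Theorem \ref{thm:stripsoln} are satisfied by the pair $(f_c,g_c)$, and that theorem yields directly the desired factorization $g_c=e^{i\kappa}W^{-1/2}B_{g_c}S_{g_c}O_{g_c}$, together with the three structural descriptions of $B_{g_c}$, $S_{g_c}$ (via a positive singular measure $\mu_{g_c}=\mu_{f_c}+\sigma$ with $\sigma$ odd and controlled by $C_*\mu_{f_c}$), and $O_{g_c}$ (as the outer part of $uO_{f_c}$ with $u\in\mathcal N^+_\tau(\mathcal{S})$ outer and $uu^*=1$). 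Undoing the scaling $g(x)=g_c(x/c)$ transports this factorization back to $g$.

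There is no serious obstacle here: the only thing to check carefully is the bookkeeping in the scaling step, namely that the rescaling commutes with the inner-outer-Blaschke decomposition in the natural way (zeros, singular measure on $\partial\mathcal{S}$, and outer part all rescale consistently). The substantive content—the existence and structure of solutions—is entirely inherited from Theorem \ref{thm:stripsoln}, which in turn rests on Lemma \ref{lem:strip} and Lemma \ref{lem:disc}.
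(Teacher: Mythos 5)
Your proposal is correct and follows essentially the same route as the paper: the authors likewise rely on the rescaling $f_c(x)=f(cx)$ from Section 3.1 to reduce to $c=1$, the identification $\widehat{g}\in L^2(\R,e^{2|\xi|}\,\d\xi)\Leftrightarrow g\in H^2_\tau(\mathcal{S})$, and then deduce the corollary directly from Theorem \ref{thm:stripsoln}. Your extra remark about checking that the rescaling is compatible with the Blaschke/singular/outer bookkeeping is a reasonable point of care, but it does not change the argument.
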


\section{Coupled Phase Retrieval Problems}
In this section, we are investigating coupled phase retieval problems, i.e. problems of the form $|u|=|v|$, $|Tu|=|Tv|$ where $T$ is some transform. This additional assumption involving $T$ may either lead to uniqueness or at least to the reduction of the set of solutions.

\subsection{Adding a Fixed Reference}
Klibanov {\it et. al.} \cite{Kli} considered the following constrained problem in the band-limited case: 
\begin{equation}
	\label{eq:klibanov}
	|g|=|f| \quad \text{and}\quad |g-h|=|f-h|
\end{equation}
where $h$ is a suitable fixed reference signal. They were able to show that there are at most two solutions of this problem. For the following result, we look at a similar problem. It turns out that for the wide-band case, we also obtain two solutions. 

\begin{theorem}
	Let $f,g\in H^2_{\tau}(\mathcal{S})$ and $h$ be a nonzero complex-valued function such that $\Phi=e^{i\arg h}$ is analytic on $\R$. Suppose that $|g(x)|=|f(x)|$ and $|g(x)-h(x)|=|f(x)-h(x)|$ for (a.e.) $x\in\R$. Then there exists two solutions of this problem, namely $g(x)=f(x)$ or $g(x)=\overline{f(x)}\Phi(x)^2$, for $x\in\R$.
\end{theorem}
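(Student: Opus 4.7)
The plan is to analyze the two constraints pointwise on $\R$ using the algebra of complex moduli, derive a pointwise dichotomy, and then promote it to a global identity via analytic continuation.

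First, I would subtract $|g|^2 = |f|^2$ from $|g-h|^2 = |f-h|^2$ to obtain the linear relation $\text{Re}\bigl(\overline{h}(g-f)\bigr) = 0$ on $\R$. Writing $h = |h|\Phi$ with $|\Phi|=1$ on $\R$ and $|h|>0$, this is equivalent to $\text{Re}\bigl(\overline{\Phi}(g-f)\bigr) = 0$, so that
\[
g = f + i\,\Phi\,\rho
\]
for some real-valued function $\rho$ on $\R$. Substituting back into $|g|^2 = |f|^2$ and using $|\Phi|^2 = 1$ together with $\Phi\overline{f} - \overline{\Phi}f = 2i\,\text{Im}(\Phi\overline{f})$, a direct expansion yields
\[
|g|^2 - |f|^2 = \rho\bigl(\rho - 2\,\text{Im}(\Phi\overline{f})\bigr) = 0
\]
at every $x\in\R$. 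Hence either $\rho(x) = 0$, giving $g(x) = f(x)$, or $\rho(x) = 2\,\text{Im}(\Phi(x)\overline{f(x)})$, which when reinserted into $g = f + i\Phi\rho$ simplifies to $g(x) = \Phi(x)^2\overline{f(x)}$. In particular the product $(g-f)(g - \Phi^2\overline{f})$ vanishes at every $x\in\R$.

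Next I would globalize this pointwise dichotomy by analytic continuation. Since $f\in H^2_\tau(\mathcal{S})$, the reflected function $f^*(z) = \overline{f(\bar z)}$ is holomorphic on $\mathcal{S}$ with $f^*(x) = \overline{f(x)}$ on $\R$. Real-analyticity of $\Phi$ on $\R$ furnishes a holomorphic extension to an open neighborhood of $\R$ in $\C$; intersecting with $\mathcal{S}$ and passing to the connected component containing $\R$ produces an open connected set $U\subset\mathcal{S}$ on which $\Phi$, $f$, $f^*$, and $g$ are all holomorphic. The function
\[
P(z) := \bigl(g(z) - f(z)\bigr)\bigl(g(z) - \Phi(z)^2 f^*(z)\bigr)
\]
is then holomorphic on $U$ and vanishes on $\R \subset U$, a set with accumulation points in $U$. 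By the identity theorem, $P\equiv 0$ on $U$, and since $\hol(U)$ is an integral domain for the connected set $U$, one of the two factors is identically zero on $U$. Restricting to $\R$ yields exactly the two solutions $g(x) = f(x)$ or $g(x) = \overline{f(x)}\Phi(x)^2$ for $x\in\R$.

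The main obstacle is quite mild and is essentially bookkeeping: one must carefully extract the common connected domain of analyticity $U$ from the hypothesis that $\Phi$ is only real-analytic on $\R$ rather than on all of $\mathcal{S}$, so that the identity theorem and integral-domain argument may be applied to $P$. Once $U$ is in hand, the proof is routine: the pointwise algebra is elementary, and the analytic continuation step is standard.
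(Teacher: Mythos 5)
Your proposal is correct and takes essentially the same route as the paper: you derive the same pointwise dichotomy $g(x)\in\{f(x),\ \overline{f(x)}\Phi(x)^2\}$ (algebraically, where the paper reads it off geometrically as the two intersection points of the circles $\mathcal C(0,|f(x)|)$ and $\mathcal C(h(x),|f(x)-h(x)|)$), and then conclude by analyticity. The only cosmetic difference is in the globalization step, where you use the vanishing of the product $(g-f)(g-\Phi^2 f^*)$ together with the integral-domain property of $\mathrm{Hol}(U)$ on a connected neighborhood of $\R$, while the paper invokes the pigeonhole principle (one alternative holds on a set of positive measure) and the identity theorem; both arguments are sound.
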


\begin{proof}
	Consider the two circles in $\C$: $\mathcal C(0,|f(x)|)$ and $\mathcal C(h(x),|f(x)-h(x)|)$. These two circles have two intersection points, one being $f(x)$, the other being $\overline{f(x)}\Phi(x)^2$ (eventually being the same as the first one). 
	\begin{center}
		\begin{tikzpicture}[line cap=round,line join=round,>=triangle 45,x=2.361917598474852cm,y=2.3619725559168456cm]
	\draw[color=black] (-0.8975623766832053,0.) -- (2.0712118110902464,0.);
	\foreach \x in {-0.5,0.5,1.,1.5,2.}
	\draw[shift={(\x,0)},color=black] (0pt,-2pt);
	\draw[color=black] (0.,-0.6756260552541451) -- (0.,1.7291436300545806);
	\foreach \y in {-0.5,0.5,1.,1.5}
	\draw[shift={(0,\y)},color=black] (-2pt,0pt);
	\clip(-0.8975623766832053,-0.6756260552541451) rectangle (2.0712118110902464,1.7291436300545806);
	\draw [line width=1.pt,color=blue] (0.,0.) ellipse (1.3824866751867755cm and 1.3825188430876827cm);
	\draw [line width=1.pt,color=blue] (1.0476687316797637,0.7552908529911855) ellipse (2.0936673898810967cm and 2.093716105638246cm);
	\draw [line width=1.pt,dash pattern=on 4pt off 4pt] (0.,0.)-- (0.18401695336559143,0.5556453517332945);
	\draw [line width=1.pt,dash pattern=on 4pt off 4pt] (0.,0.)-- (0.5853229693223431,-0.0010087677861107705);
	\draw [line width=1.pt,dash pattern=on 4pt off 4pt] (0.,0.)-- (1.0476687316797637,0.7552908529911855);
	\draw [line width=1.pt,dash pattern=on 4pt off 4pt] (0.18401695336559143,0.5556453517332945)-- (1.0476687316797637,0.7552908529911855);
	\draw [line width=1.pt,dash pattern=on 4pt off 4pt] (1.0476687316797637,0.7552908529911855)-- (0.5853229693223431,-0.0010087677861107705);
	\draw [line width=1.pt,dash pattern=on 4pt off 4pt] (0.18401695336559143,0.5556453517332945)-- (0.5853229693223431,-0.0010087677861107705);
	\draw (0.06310567990319361,0) node[anchor=north west] {${}_{|f(x)|}$};
	\draw (-0.14762150670285515,0.2602508068118796) node[anchor=north west] {${}_0$};
	\draw (0.8998165678977991,1.00659726085905763) node[anchor=north west] {${}_{h(x)}$};
	\draw (0.8578379836019023,0.5143630673728532) node[anchor=north west] {${}_{|f(x)-h(x)|}$};
	\draw [color=ffqqqq](0.8271107969958536,-0.32359939414588021) node[anchor=north west] {${}_{f(x)}$};
	\draw [color=ffqqqq](-0.7983966418097705,1.0659726085905763) node[anchor=north west] {${}_{\overline{f(x)}\Phi(x)^2}$};
	\draw [->,line width=0.8pt] (-0.16663221455490681,0.8618603480296028) -- (0.1606038985191835,0.5850847002079304);
	\draw [->,line width=0.8pt] (0.8762765318692888,-0.37938013426902037) -- (0.5992614757199941,-0.027990725492131302);
	\begin{scriptsize}
	\draw [fill=black] (0.,0.) circle (2.0pt);
	\draw [fill=black] (1.0476687316797637,0.7552908529911855) circle (2.0pt);
	\draw [fill=ffqqqq] (0.18401695336559143,0.5556453517332945) circle (2.0pt);
	\draw [fill=ffqqqq] (0.5853229693223431,-0.0010087677861107705) circle (2.0pt);
	\end{scriptsize}
	\end{tikzpicture}

\smallskip

The circles $\mathcal C(0,|f(x)|)$ and $\mathcal C(h(x),|f(x)-h(x)|)$.
	\end{center}
	Therefore, for each $x\in\R$, either $g(x)=f(x)$ or $g(x)=\overline{f(x)}\Phi(x)^2$. By the pigeonhole principle, one of these two alternatives is valid on a set of positive measure. But $f,g$ and $\bar{f}\Phi^2$ are all analytic so that if $g=f$ on a set of positive measure, then $g=f$ everywhere, otherwise if $g=\bar{f}\Phi^2$ on a set of positive measure, then $g=\bar{f}\Phi^2$ everywhere as well.
\end{proof}

\begin{remark}
If we do not assume $\Phi$ to be analytic, then $\bar f\Phi^2$ may not be analytic and would therefore not be a solution.
\end{remark}

\subsection{Pauli's Problem}
For our next result, we add a constraint involving the Fourier transforms:
\begin{equation}
\label{eq:pauli}
|g|=|f| \qquad \text{ and } \qquad |\widehat{g}|=|\widehat{f}|
\end{equation}
This problem is due to Pauli, who speculated that \eqref{eq:pauli} would imply $g=cf$ for some $c\in\T$.
However, one may construct many pairs $(f,g)$ satisfying \eqref{eq:pauli}
for which this is not the case (see e.g. Vogt \cite{Vo}, Corbett and Hurst \cite{CH,CH1}).
Such pairs are now called \textit{Pauli partners}. In the band-limited case, Ismagilov \cite{Is} and the first author \cite{Ja} have independently shown that the set of the Pauli partners may be arbitrarily large. However, although this is not explicitly stated in \cite{Is,Ja}, for
a given band-limited $f$ only finitely band-limited partners (up to trivial solutions) are constructed. The following result shows that the solution set of the Pauli problem in the wide-band case may be arbitrarily large as well and even uncountable.

\begin{theorem}
	\label{SharpRiesz}
	There exists $f\in H^2_{\tau}(\mathcal{S})$ which has a nondenumerable infinity of Pauli partners which are not constant multiples of one another.
\end{theorem}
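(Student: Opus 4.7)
The plan is to exhibit an explicit family $\{g_\epsilon\}_{\epsilon\in\{\pm1\}^{\mathbb N}}$ of Pauli partners of a single $f\in H^2_\tau(\mathcal S)$ via a Riesz-product construction. Since $|\{\pm1\}^{\mathbb N}|=2^{\aleph_0}$, producing such a family and then ruling out proportionality between $g_\epsilon$ and $g_{\epsilon'}$ for distinct sign sequences will suffice.

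First I would fix a constant $c>1$, a strongly lacunary sequence $(\lambda_k)_{k\ge 1}$ (say $\lambda_{k+1}\ge 3\lambda_k$, with $\lambda_1$ large), and positive reals $(a_k)\in\ell^1$ with $\sum_k a_k$ sufficiently small. For each $\epsilon=(\epsilon_k)\in\{\pm1\}^{\mathbb N}$ I set
\[
 \hat g_\epsilon(\xi):=e^{-c|\xi|}\prod_{k=1}^{\infty}\bigl(1+a_k\,e^{i\epsilon_k\lambda_k\xi}\bigr),\qquad\xi\in\R,
\]
and choose $f:=g_{(+1,+1,\ldots)}$. The product converges uniformly on $\R$ to a bounded function (dominated by $\prod_k(1+a_k)<\infty$), so the weight $e^{-c|\xi|}$ with $c>1$ places $\hat g_\epsilon$ in $L^2(\R,e^{2|\xi|}\d\xi)$, whence $g_\epsilon\in H^2_\tau(\mathcal S)$. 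The Pauli Fourier-modulus condition is then immediate, because $\cos$ is even:
\[
 |1+a_ke^{i\epsilon_k\lambda_k\xi}|^2=1+a_k^2+2a_k\cos(\lambda_k\xi)
\]
is $\epsilon_k$-independent, so $|\hat g_\epsilon|\equiv|\hat f|$ on $\R$.

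The delicate point is the spatial-modulus condition $|g_\epsilon(x)|=|f(x)|$ on $\R$. Expanding the product and inverting,
\[
 g_\epsilon(x)=\sum_{S\subset\mathbb N,\,|S|<\infty}\Bigl(\prod_{k\in S}a_k\Bigr)\,h\!\Bigl(x+\sum_{k\in S}\epsilon_k\lambda_k\Bigr),
\]
with $h(x)=c/(\pi(c^2+x^2))$ the inverse Fourier transform of $e^{-c|\xi|}$. The required equality should follow from a Riesz-product cancellation exploiting the strict lacunarity to organize the translates into scales where sign-flips produce compensating contributions. This is the main technical obstacle, since the Cauchy kernel $h$ has unbounded support, so the translates for different $S$ overlap and brute-force pairing arguments fail for the naive product. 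Natural remedies are to symmetrize each factor (for example pairing $1+a_ke^{i\epsilon_k\lambda_k\xi}$ with $1+a_ke^{-i\epsilon_k\lambda_k\xi}$, or passing to factors of the form $\cos(\lambda_k\xi)+ia_k\epsilon_k\sin(\lambda_k\xi)$), or to reformulate the construction entirely on the strip via Lemma \ref{lem:fact-strip}, encoding $\epsilon$ as modifications of the singular inner factor whose associated measure on $\partial\mathcal S$ is itself a Riesz product, and invoking Theorem \ref{thm:stripsoln}. Once the spatial equality is secured, non-proportionality of $g_\epsilon$ and $g_{\epsilon'}$ for $\epsilon\ne\epsilon'$ will follow by comparing the discrete translate patterns $\{t^\epsilon_S:=\sum_{k\in S}\epsilon_k\lambda_k\}$ appearing in the Fourier expansion, which are distinct for distinct $\epsilon$ by the lacunarity; this produces a continuum of mutually non-proportional Pauli partners of $f$, as required.
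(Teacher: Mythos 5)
Your construction secures only the easy half of the Pauli condition: the factor-by-factor identity $|1+a_ke^{i\epsilon_k\lambda_k\xi}|^2=1+a_k^2+2a_k\cos(\lambda_k\xi)$ gives $|\widehat{g_\epsilon}|=|\widehat f|$, but the spatial equality $|g_\epsilon(x)|=|f(x)|$ on $\R$ — which is the actual content of the theorem — is left unproven, and for the construction as written it is in fact false. Test a single factor: with $\widehat{g_{\pm}}(\xi)=e^{-c|\xi|}\bigl(1+a\,e^{\pm i\lambda\xi}\bigr)$ one gets $g_{\pm}(x)=h(x)+a\,h(x\mp\lambda)$, where $h$ is the (real, even, positive) Cauchy kernel, so $g_{-}(x)=g_{+}(-x)$; for small $a>0$ both functions are positive and neither is even, hence $|g_{+}|\neq|g_{-}|$ on $\R$ (compare at $x=\lambda$: $h(\lambda)+a\,h(0)$ versus $h(\lambda)+a\,h(2\lambda)$). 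Flipping the sign in the exponent is therefore not a modulus-preserving operation on the space side, the reflection $x\mapsto-x$ is not among the trivial invariances of the problem, and no lacunarity or cancellation argument can repair the naive product; the ``remedies'' you mention (symmetrizing the factors, moving the construction to the singular inner part on the strip) are only named, not carried out, so the main step of the proof is missing.

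For comparison, the paper's proof places the Riesz product in the \emph{space} variable with purely imaginary perturbations, $R_{\alpha(\varepsilon)}(x)=\prod_{n\ge1}\bigl(1+2i\alpha_n\varepsilon_n\sin(2\pi 3^n x)\bigr)$, so that each factor has modulus $\sqrt{1+4\alpha_n^2\sin^2(2\pi3^nx)}$ independent of $\varepsilon_n$ and the pointwise equality $|f_\varepsilon|=|f|$ on $\R$ is immediate; it then multiplies by a fixed $\varphi$ with $\widehat\varphi$ bounded and supported in $[0,1]$, so that $\widehat{f_\varepsilon}(\xi)=\sum_k a_k(\varepsilon)\widehat\varphi(\xi-k)$ decomposes into disjointly supported pieces and $|\widehat{f_\varepsilon}|=|\widehat f|$ follows from $|a_k(\varepsilon)|=|a_k|$, which is read off the explicit formula for the coefficients; finally $|\alpha_j|\le e^{-2\cdot3^{j+1}}$ forces $|\widehat{f_\varepsilon}(\xi)|\lesssim e^{-2|\xi|}$, hence $f_\varepsilon\in H^2_\tau(\mathcal S)$. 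The two structural devices your dual construction lacks are precisely these: a perturbation whose modulus is sign-independent on the side where pointwise equality is needed, and disjointness of supports on the other side so that sign-independence of the coefficient moduli suffices there; the overlapping Cauchy-kernel translates show that the second device is unavailable in your setup, so a correct proof essentially has to switch to the paper's arrangement (or prove the spatial equality by a genuinely new argument, which you have not supplied).
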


\begin{proof} The proof is a direct adaptation of \cite{Is,Ja}.
	
	Let $\{\alpha_n\}_{n=0}^{\infty}$ be a sequence of non-zero real numbers such that $\displaystyle\sum_{n=1}^{+\infty}|\alpha_n|^2<\infty$ and consider the associated Riesz product 
	$$
	R_\alpha(x)=\prod_{n=1}^{\infty}\big(1+2i\alpha_n\sin(2\pi3^nx)\big).
	$$
	For properties of Riesz products, we refer the reader to the book of Katznelson \cite{Ka}.
 	We may write this Riesz product as a Fourier series 
	 \begin{equation}
		\label{eq:riesz}
		R_\alpha(x)=\displaystyle\sum_{k\in\mathbb Z}a_ke^{2\pi i kx}.
	 \end{equation}
	 
	Next, let $\varphi\in L^2(\R)$ be such that $\widehat{\varphi}$ is supported on $[0,1]$ and bounded. For all $x\in\R$, take $f=R_\alpha\varphi$. As
	$$
	f(x)=
	\Big(\sum_{k\in\mathbb Z}a_ke^{2\pi i kx}\Big)\varphi(x),
	$$
	we get
	$$
	\widehat{f}(\xi)=\sum_{k\in\mathbb Z}a_k\widehat{\varphi}(\xi-k).
	$$

	Now, observe that $a_k=0$ unless there exists an integer $N$ and $\eta_1,\ldots,\eta_N\in\{-1,0,1\}$ with $\eta_N\not=0$ such that $\displaystyle k=\sum_{j=1}^N\eta_j3^j$. Further, $N$ and the $\eta_j$'s are uniquely determined by $k$. In this case, a simple computation shows that $3^{N-1}\leq|k|\leq 3^{N+1}$ and that
	\begin{equation}
		\label{eq:rieszFourier}
		|a_k|=\prod_{j=1,\,\eta_j\ne 0}^N|\alpha_j|.
	\end{equation}
	Therefore, if we choose $0<|\alpha_j|\leq e^{-2\cdot3^{j+1}}$, we get
	$$
	|a_k|\leq |\alpha_N|\leq  e^{-2\cdot 3^{N+1}}\leq e^{-2 |k|}.
	$$
	As a consequence, for $k\leq |\xi|\leq k+1$,
	$$
	|\widehat{f}(\xi)|=|a_k||\widehat{\varphi}(\xi-k)|\leq e^{-2|k|}\|\widehat{\varphi}\|_\infty\leq Ce^{-2|\xi|}.
	$$
	It follows that $f\in H^2_\tau(\mathcal{S})$.

	Next, let $\varepsilon=\{\varepsilon_n\}_{n=1}^{\infty}\in\{-1,1\}^{\mathbb N}$ and
	$\alpha(\varepsilon)=\{\alpha_n\varepsilon_n\}_{n=1}^{\infty}$. In particular, for $\varepsilon=\mathbf{1}=(1,1,\ldots)$, $\alpha(\mathbf{1})=\alpha$.
	Observe that the associated Riesz product 
	$$
	R_{\alpha(\varepsilon)}(x)=\prod_{n=1}^{\infty}\big(1+2i\alpha_n\varepsilon_n\sin(2\pi3^nx)\big)
	 =\sum_{k\in\mathbb Z}a_k(\varepsilon)e^{2\pi i kx}
	$$
	has the following properties:
	\begin{itemize}
		\item for every $x\in\R$, $|R_{\alpha(\varepsilon)}(x)|=|R_\alpha(x)|$;
		\item for every $k\in\mathbb Z$, $|a_k(\varepsilon)|=|a_k|$.
	\end{itemize}
	This last property follows directly from \eqref{eq:rieszFourier}. Note also that $R_{\alpha(\varepsilon)}$ is not a constant multiple of $R_{\alpha(\varepsilon')}$ if $\varepsilon\neq\varepsilon'$.

	It remains to define $f_\varepsilon=R_{\alpha(\varepsilon)}\varphi$. Then $f_\varepsilon$ has the following properties:
	\begin{itemize}
		\item $f_\varepsilon\in H^2_\tau(\mathcal{S})$ and $f_\varepsilon$ is not a constant multiple of $f_{\varepsilon'}$ if $\varepsilon\neq\varepsilon'$;
		\item $|f_\varepsilon(x)|=|f_{\varepsilon'}(x)|$ for all $x\in\R$;
		\item $|\widehat{f_\varepsilon}(\xi)|=|\widehat{f_{\varepsilon'}}(\xi)|$ since for $k\leq|\xi|\leq k+1$, $k\in\mathbb Z$, 
		$$|\widehat{f_\varepsilon}(\xi)|=|a_k(\varepsilon)||\widehat{\varphi}(\xi-k)|=|a_k(\varepsilon')||\widehat{\varphi}(\xi-k)|=|\widehat{f_{\varepsilon'}}(\xi)|.$$
	\end{itemize}

\end{proof}

\subsection{Derivation Operator}
We now look at a direct consequence of Corollary \ref{cor:uv-strip}. Let $b,q\in\R$ with $|q|<1$. For all $z\in \mathcal{S}$ and $f\in H^2_{\tau}(\mathcal{S})$, consider the operator $\dfrac{\partial}{\partial z}$ where $\dfrac{\partial}{\partial z}f(z)=f'(z)$, the operator $\delta$ given by
$$
\delta(f)(z)=f(z+b)-f(z),
$$
and the operator $\gamma$ given by
$$
\gamma(f)(z)=f(qz)-f(z).
$$
Let $D$ be one of $\dfrac{\partial}{\partial z},\delta$ or $\gamma$. McDonald \cite[Theorem 1]{Mc} considered the coupled phase retrieval problem: $f,g$ entire, $|g(x)|=|f(x)|$ with the additional constraint $|Dg(x)|=|Df(x)|$ for $x\in\R$. McDonald showed that if $f=uv$ and $g=uv^*$, then $|Dg|=|Df|$ is equivalent to 
$$
\Big(\dfrac{Dv}{v}-\dfrac{Dv^*}{v^*}\Big)\Big(\dfrac{Du^*}{u^*}-\dfrac{Du}{u}\Big)=\dfrac{DfDf^*-DgDg^*}{ff^*}=0,
$$
which imposes strong restrictions on either $u$ or $v$. With these, McDonald was able to significantly reduce the solution set into two solutions. As a consequence of Corollary \ref{cor:uv-strip}, McDonald's result directly extends to the wide-band case. We omit the proof as it is {\it mutatis mutandis} the one provided by McDonald.
\begin{corollary}
Let $f,g\in H^2_{\tau}(\mathcal{S})$, $\dfrac{\d}{\d x}$ be the operator where $\dfrac{\d}{\d x}f(x)=f'(x)$ 
for all $x\in\R$, and $D$ be one of the operators $\dfrac{\d}{\d x},\delta$ or $\gamma$.
Suppose that $|g(x)|=|f(x)|$ and $|Dg(x)|=|Df(x)|$ for $x\in\R$. Then:
	\begin{enumerate}
	\renewcommand{\theenumi}{\roman{enumi}}
		\item For the cases $D=\dfrac{\d}{\d x}$ and $D=\gamma$, either $g=\beta f$ or $g=\beta f^*$ for some constant $\beta\in \R$.
		\item For the case $D=\delta$, either $g=Vf$ or $g=Vf^*$ where $V$ is a meromorphic function that has period $b$ and continuous and unimodular on $\R$.
	\end{enumerate}
\end{corollary}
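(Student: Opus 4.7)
\emph{Proof plan.} My approach is to follow McDonald's strategy from \cite{Mc}, adapting it to $H^2_\tau(\mathcal S)$ via the strip factorization of Corollary \ref{cor:uv-strip}. From $|f|=|g|$ on $\R$, that corollary provides $u,v\in\mathrm{Hol}(\mathcal S)$ with $f=uv$ and $g=uv^*$, so that $f^*=u^*v^*$ and $g^*=u^*v$.

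The algebraic heart of the proof is McDonald's identity
\[
\frac{Df\,Df^*-Dg\,Dg^*}{ff^*}=\left(\frac{Du}{u}-\frac{Du^*}{u^*}\right)\left(\frac{Dv^*}{v^*}-\frac{Dv}{v}\right)
\]
on $\mathcal S$. For $D=\d/\d x$ this follows from the Leibniz rule. For $D=\delta$ and $D=\gamma$ one uses the analogous ``shift-Leibniz'' rule $D(uv)(z)=Du(z)\,v(Tz)+u(z)\,Dv(z)$, where $T$ is the translation $z\mapsto z+b$ or the dilation $z\mapsto qz$. Expanding $DfDf^*-DgDg^*$ yields eight terms; the four containing $Du\,Du^*$ or $Dv\,Dv^*$ cancel in pairs, and the remaining four factor directly into the displayed product.

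From $|Df|=|Dg|$ on $\R$ we have $Df\,Df^*=Dg\,Dg^*$ on $\R$. Since $\mathcal S$ is invariant under $z\mapsto z+b$ (as $b\in\R$) and under $z\mapsto qz$ (as $q\in\R$, $|q|<1$), the functions $Df,Dg,Df^*,Dg^*$ are holomorphic on $\mathcal S$, and the identity theorem extends the equality to $\mathcal S$. Hence the product above vanishes identically on the connected set $\mathcal S$, so one of
\[
\frac{Du}{u}=\frac{Du^*}{u^*}\qquad\text{or}\qquad\frac{Dv}{v}=\frac{Dv^*}{v^*}
\]
holds as a meromorphic identity throughout $\mathcal S$. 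I would then resolve each operator in turn. For $D=\d/\d x$, $(\log u)'=(\log u^*)'$ integrates to $u=cu^*$ for a constant $c$, and $g=uv^*=cu^*v^*=cf^*$; the symmetric alternative $v=cv^*$ yields $g=\beta f$. For $D=\gamma$, the ratio $h:=u/u^*$ satisfies $h(qz)=h(z)$, so iterating gives $h(q^n z_0)=h(z_0)$ for every $z_0\in\mathcal S$, and since $q^n z_0\to 0\in\mathcal S$ continuity of $h$ at $0$ forces $h$ to be constant, yielding again $g=\beta f^*$ or $g=\beta f$. For $D=\delta$, $V:=u/u^*$ is $b$-periodic and meromorphic on $\mathcal S$ with $V(x)=u(x)/\overline{u(x)}$ unimodular on $\R$; periodicity extends $V$ to $\C$, and then $g=uv^*=Vu^*v^*=Vf^*$, with the symmetric alternative giving $g=Vf$.

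The main obstacle I anticipate is the careful handling of the logarithmic ratios $Du/u$ and $Du^*/u^*$ as meromorphic objects in the presence of zeros and poles of $u,v$: one must interpret the vanishing of the factored product away from its singular set, propagate the conclusion to all of $\mathcal S$ by the identity theorem, and verify the claimed regularity of the resulting multiplier (constancy when $D=\d/\d x$ or $D=\gamma$; periodicity, continuity and unimodularity on $\R$ when $D=\delta$). Once the factored identity is in place, the remainder is essentially the casework of \cite[Theorem 1]{Mc}, which is why the authors invoke that proof verbatim.
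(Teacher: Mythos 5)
Your proposal is correct and follows essentially the same route as the paper, which deliberately omits the proof because it is \emph{mutatis mutandis} McDonald's argument: factor $f=uv$, $g=uv^*$ via Corollary \ref{cor:uv-strip}, derive the identity $\bigl(\tfrac{Dv}{v}-\tfrac{Dv^*}{v^*}\bigr)\bigl(\tfrac{Du^*}{u^*}-\tfrac{Du}{u}\bigr)=0$ on $\mathcal{S}$, and then run the casework of \cite[Theorem 1]{Mc} for $\d/\d x$, $\gamma$ and $\delta$. Your sketch fills in exactly the details the paper leaves implicit (extension from $\R$ to $\mathcal{S}$ by the identity theorem, invariance of $\mathcal{S}$ under the shifts, and the resolution of $u/u^*$ or $v/v^*$ in each case), so no discrepancy to report.
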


\subsection{Modulus on a segment on $\mathcal{S}$}
In the spirit of what was done by Boche {\it et. al.} \cite{Bo}, we now consider that $|g(z)|=|f(z)|$ for $z$ in a curve on $\mathcal{S}$. A similar idea can also be found in \cite{Ja2}.
For this part, we add the fact that $|g(z)|=|f(z)|$ for every $z$ on a segment lying on the strip $\mathcal{S}$.  We first look at this additional constraint on the phase retrieval problem on the disc.

\begin{lemma}
	\label{lem:angledisc}
	Let $f,g\in H^2(\D)$ such that $|g(x)|=|f(x)|$ for $x\in(-1,1)$ and 
	\begin{equation}
		\label{eq:angle}
		|g(z)|=|f(z)|,\qquad z\in e^{i\theta}(-1,1)
	\end{equation}
	where $\theta\notin\pi\mathbb Q$. Then $g=cf$ for some $c\in\T$.
\end{lemma}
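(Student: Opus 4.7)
The plan is to recast both moduli conditions as holomorphic identities on $\mathbb{D}$, extract a rotational functional equation for the meromorphic quotient $h:=f/g$, and then exploit the irrationality of $\theta/\pi$ to force $h$ to be constant. I may assume $f,g\not\equiv 0$, since the degenerate case is trivial.

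First, the condition $|f|^2=|g|^2$ on $(-1,1)$ is the one treated in Lemma \ref{lem:disc}: real-analyticity on $(-1,1)$ together with the identity theorem gives
$$
f(z)f^*(z)=g(z)g^*(z),\qquad z\in\mathbb{D}.\quad(*)
$$
For the second segment, set $f_\theta(z):=f(e^{i\theta}z)$ and $g_\theta(z):=g(e^{i\theta}z)$, which are holomorphic on $\mathbb{D}$ and satisfy $|f_\theta|=|g_\theta|$ on $(-1,1)$. A direct computation shows $f_\theta^*(z)=f^*(e^{-i\theta}z)$, with the analogous identity for $g_\theta^*$, and applying $(*)$ to the pair $(f_\theta,g_\theta)$ yields
$$
f(e^{i\theta}z)f^*(e^{-i\theta}z)=g(e^{i\theta}z)g^*(e^{-i\theta}z),\qquad z\in\mathbb{D}.\quad(**)
$$

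Next, away from the discrete zero sets of $g$ and $f^*$, $(*)$ can be rewritten as $f/g=g^*/f^*$. Dividing $(**)$ by $g(e^{i\theta}z)f^*(e^{-i\theta}z)$ and invoking this last identity with $z$ replaced by $e^{-i\theta}z$ on the right gives
$$
\frac{f(e^{i\theta}z)}{g(e^{i\theta}z)}=\frac{g^*(e^{-i\theta}z)}{f^*(e^{-i\theta}z)}=\frac{f(e^{-i\theta}z)}{g(e^{-i\theta}z)}.
$$
In other words, the meromorphic function $h:=f/g$ on $\mathbb{D}$ satisfies $h(e^{i\theta}z)=h(e^{-i\theta}z)$, or equivalently $h(\alpha z)=h(z)$ with $\alpha:=e^{2i\theta}$. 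Iterating yields $h(\alpha^n z)=h(z)$ for every $n\in\mathbb{Z}$.

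Finally, since $\theta\notin\pi\mathbb{Q}$, also $2\theta\notin\pi\mathbb{Q}$, so $\{\alpha^n\}_{n\in\mathbb{Z}}$ is dense in $\mathbb{T}$. Pick any $z_0\in\mathbb{D}$ at which $h$ is holomorphic (the pole set of $h$ is discrete); the orbit $\{\alpha^n z_0\}$ lies on the circle $\{|z|=|z_0|\}\subset\mathbb{D}$ and has an accumulation point in $\mathbb{D}$ at which $h$ attains the value $h(z_0)$. By the identity theorem $h$ is constant on the connected component of $\mathbb{D}$ minus its poles containing $z_0$; since this complement is connected, $h\equiv c$ on $\mathbb{D}$. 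Hence $f=cg$, and $|f|=|g|$ on $(-1,1)$ forces $|c|=1$, i.e.\ $c\in\mathbb{T}$.

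The main subtlety is the bookkeeping around the zero sets of $f$, $g$, $f^*$, $g^*$ when manipulating the ratio $h$: these sets are discrete because $f,g$ are nonzero holomorphic functions on $\mathbb{D}$, so the algebraic manipulations are valid on a dense open subset of $\mathbb{D}$ and the functional equation $h\circ(\alpha\cdot)=h$ extends to all of $\mathbb{D}$ by meromorphy; I would simply note this rather than carry out the division at a formal level.
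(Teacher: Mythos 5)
Your proof is correct, but it follows a genuinely different route from the paper. The paper works through the canonical factorization in $H^2(\D)$: it shows that the symmetric difference of the zero sets is invariant under conjugation and under reflection across the segment, hence under the rotation by $2\theta$, and is therefore empty since $\theta\notin\pi\mathbb{Q}$ and zero sets are discrete; it then matches the singular measures and the boundary moduli by comparing Fourier coefficients (solving the two linear relations coming from the coefficients $e^{\pm in\theta}$), concluding $B_f=B_g$, $S_f=S_g$, $O_f=O_g$. You instead bypass the factorization entirely: both modulus conditions become holomorphic identities $ff^*=gg^*$ and $f(e^{i\theta}\cdot)f^*(e^{-i\theta}\cdot)=g(e^{i\theta}\cdot)g^*(e^{-i\theta}\cdot)$ via the identity theorem, from which the meromorphic quotient $h=f/g$ satisfies $h(e^{2i\theta}z)=h(z)$; since $e^{2i\theta}$ is not a root of unity, the orbit of any point is dense in a circle inside $\D$, and the identity theorem forces $h$ to be constant (note in passing that the functional equation makes the polar set of $h$ rotation-invariant, so discreteness already rules out poles off the origin, and the accumulation point of the orbit cannot be a pole since $h$ equals a finite constant arbitrarily close to it). Your argument is shorter and more elementary — it uses no $H^2$ or boundary-value structure at all and proves the statement for arbitrary nonzero holomorphic $f,g$ on $\D$, exploiting the same irrational-rotation density idea that the paper applies only to the Blaschke part. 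The paper's proof, while longer, stays within the inner--outer framework used throughout and makes explicit how each factor (zeros, singular measure, outer part) is individually pinned down, which is the structural information the rest of the paper trades in. The only points to tighten in your write-up are cosmetic: dispose of the case $f\equiv0$ (then $g\equiv0$), take $z_0\neq0$ so the orbit is infinite, and remark that constancy of $h$ off its polar set excludes poles altogether, so $f=cg$ on all of $\D$ and $|c|=1$ from $|f|=|g|$ on $(-1,1)$.
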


\begin{proof}
	Let $f,g\in H^2(\D)$ and $\mathscr Z=Z(f)\triangle Z(g)$ be the symetric difference of the zero sets of $f$ and $g$ (that is the non common zeros). Since $|g(x)|=|f(x)|$ 
	for all $x\in (-1,1)$, we have $\mathscr Z=\overline{\mathscr Z}$. It clearly follows that $\mathscr Z\cap\R=\emptyset$. 
	\begin{center}
		\begin{tikzpicture}[line cap=round,line join=round,>=triangle 45,x=1.7714371433205747cm,y=1.7712698886159624cm]
		\draw[color=black] (-1.1903296619010413,0.) -- (1.207723250118793,0.);
		\foreach \x in {-1.,-0.5,0.5,1.}
		\draw[shift={(\x,0)},color=black] (0pt,-2pt);
		\draw[color=black] (0.,-1.09092864812463) -- (0.,1.2904137024168039);
		\foreach \y in {-1.,-0.5,0.5,1.}
		\draw[shift={(0,\y)},color=black] (-2pt,0pt);
		\clip(-1.1903296619010413,-1.09092864812463) rectangle (1.207723250118793,1.2904137024168039);
		\draw [line width=1.pt,dash pattern=on 2pt off 2pt,color=ffqqqq,fill=ffqqqq,fill opacity=0.10000000149011612] (0.,0.) ellipse (1.7714371433205747cm and 1.7712698886159624cm);
		\draw [color=ffqqqq](-0.4801049318603239,0.5718332036569326) node[anchor=north west] {$\Large\mathbb D$};
		\draw (0.021230171697829633,0.07049797196399908) node[anchor=north west] {$0$};
		\draw (0.021230171697829633,1.3572583999758616) node[anchor=north west] {$i$};
		\draw[line width=1.pt, smooth,samples=100,domain=0.0:1.0] plot[parametric] function{0.3*cos((t)),0.3*sin((t))};
		\draw (0.29696447865481407,0.37129911097975915) node[anchor=north west] {$\theta$};
		\draw [line width=1.pt,color=qqqqff] (0.5403023058681398,0.8414709848078965)-- (-0.5403023058681398,-0.8414709848078965);
		\draw (1.0239003788141365,0.0788535591588813) node[anchor=north west] {$1$};
		\draw [line width=1.pt] (1.,0.)-- (-1.,0.);
		\begin{scriptsize}
		\draw [fill=qqqqff] (0.,0.) circle (2.0pt);
		\draw [color=qqqqff] (0.5403023058681398,0.8414709848078965) circle (2.0pt);
		\draw [color=qqqqff] (-0.5403023058681398,-0.8414709848078965) circle (2.0pt);
		\draw [color=black] (1.,0.) circle (2.0pt);
		\draw [color=uuuuuu] (-1.,0.) circle (2.0pt);
		\end{scriptsize}
		\end{tikzpicture}

	\smallskip
	
	The disc $\D$ and the segment $e^{i\theta}(-1,1)$.
	\end{center}
	
	\noindent Since $|g(x)|=|f(x)|$ 
	for all $x\in e^{i\theta}(-1,1)$, we have $\mathscr Z=\text{Ref}_\theta\mathscr Z$ where $\text{Ref}_\theta$ refers to a reflection with respect to the segment $e^{i\theta}(-1,1)$.
	Hence, by composing $\mathscr Z=\overline{\mathscr Z}$ and $\mathscr Z=\text{Ref}_\theta\mathscr Z$, we get that $\mathscr Z=\text{Rot}_{2\theta}\mathscr Z$, where $\text{Rot}_{2\theta}$ refers to a counterclockwise $2\theta$-rotation with respect to 0. Now, since $\theta\notin \pi\mathbb Q$, either $\mathscr Z=\emptyset$ or $\mathscr Z$ is uncountable. Since the zero set is discrete, $\mathscr Z $ cannot be uncountable, and so $\mathscr Z=\emptyset$. Hence, $Z(f)=Z(g)$, which implies that the Blaschke products formed by the zeros of $f$ and $g$ given by $B_f$ and $B_g$ respectively, are equal.

	Now, observe that since $|g(x)|=|f(x)|$ 
	for all $x\in (-1,1)$, Lemma \ref{lem:disc} implies that for $e^{i\zeta}\in\T$,
	$$
	\nu_f(e^{i\zeta})+\nu_f(e^{-i\zeta})=	\nu_g(e^{i\zeta})+\nu_g(e^{-i\zeta}).
	$$
	Using this equation, the Fourier coefficients of $\nu_f$ and $\nu_g$ satisfy
	\begin{equation}
	\label{eq:mfourier}
	\widehat{\nu_f} (n)+\widehat{\nu_f}(-n)=	\widehat{\nu_g}(n)+\widehat{\nu_g}(-n), \qquad n\in\mathbb N.
	\end{equation}
	On the other hand, $|g(x)|=|f(x)|$ 
	for all $x\in e^{i\theta}(-1,1)$ implies that $|f(e^{i\theta}x)|=|g(e^{i\theta}x)|$ for all $x\in (-1,1)$. For $z\in\D$, we now write $F(z)=f(e^{i\theta}z)$ and $G(z)=g(e^{i\theta}z)$ so that $F,G\in H^2(\D)$ and $|F(w)|=|G(w)|$ for all $w\in(-1,1)$. Note that for $w,z\in \D$ such that $w=ze^{i\theta}$, we have
	$$
	S_F(w)=\exp\Big(\int_{\mathbb T}\dfrac{w+e^{i\zeta}}{w-e^{i\zeta}}~\d\nu_F(e^{i\zeta})\Big)
	=\exp\Big(\int_{\mathbb T}\dfrac{ze^{i\theta}+e^{i\zeta}}{ze^{i\theta}-e^{i\zeta}}~\d\nu_F(e^{i\zeta})\Big),
	$$
	and so by letting $u=\zeta-\theta$, we get
	$$S_F(w)=\exp\Big(\int_{\mathbb T}\dfrac{z+e^{iu}}{z-e^{iu}}~\d\nu_f(e^{i(u+\theta)})\Big).
	$$
	Thus by Lemma \ref{lem:disc}, we have for $e^{i\zeta}\in\T$,
	\begin{equation}
	\label{eq:measf-t}
	\nu_f(e^{i(\theta+\zeta)})+\nu_f(e^{i(\theta-\zeta)})=	\nu_g(e^{i(\theta+\zeta)})+\nu_g(e^{i(\theta-\zeta)}).
	\end{equation}
	Next, define the measure $\mu$ on $\T$ by $\mu(e^{i\zeta})=\nu_f(e^{i(\zeta+\theta)})$ for $e^{i\zeta}\in\T$, with Fourier coefficients given by
	$$
	\widehat{\mu}(n)=\int_{\mathbb T}e^{-in\zeta}~\d\nu_f(e^{i(\zeta+\theta)})=e^{in\theta}\widehat{\nu_f}(n)
	$$
	for $n\in\mathbb N$. Hence, the previous equation and \eqref{eq:measf-t} imply that for $n\in\mathbb{N}$,
	$$
	e^{in\theta}\widehat{\nu_f} (n)+e^{-in\theta}\widehat{\nu_f}(-n)=	e^{in\theta}\widehat{\nu_g}(n)+e^{-in\theta}\widehat{\nu_g}(-n).
	$$
	Now this equation together with \eqref{eq:mfourier} imply that
	$$
	\widehat{\nu_g}(n)=\dfrac{e^{-in\theta}\widehat{\nu_f}(n)-e^{in\theta}
		\widehat{\nu_f}(n)}{e^{-in\theta}-e^{in\theta}}=\widehat{\nu_f}(n)
	$$
	and $\widehat{\nu_g}(-n)=\widehat{\nu_f}(-n)$, for all $n\in\mathbb N$. It follows that $\nu_f=\nu_g$ and so $S_f=S_g$.
	
	We now prove the same for the outer part. Since $|g(x)|=|f(x)|$ 
	for all $x\in (-1,1)$, Lemma \ref{lem:disc} again implies that for a.e. $e^{i\zeta}\in \T$,
	$$
	\log|f(e^{i\zeta})|+\log|f(e^{-i\zeta})|=\log|g(e^{i\zeta})|+\log|g(e^{-i\zeta})|.
	$$ 
	For $e^{i\zeta}\in\T$, letting $h_f(e^{i\zeta})=\log|f(e^{i\zeta})|$ implies that the Fourier coefficients of $h_f$ and $h_g$ satisfy
	\begin{equation}
	\label{eq:hf}
	\widehat{h_f} (n)+\widehat{h_f}(-n)=	\widehat{h_g}(n)+\widehat{h_g}(-n), \qquad n\in\mathbb N.
	\end{equation}
	On the other hand, by definition of $F$ and $G$, we have for a.e. $e^{i\zeta}\in\T$,
	$$
	\log|f(e^{i(\theta+\zeta)})|+\log|f(e^{i(\theta-\zeta)})|=\log|g(e^{i(\theta+\zeta)})|+\log|g(e^{i(\theta-\zeta)})|.
	$$ 
	Using this equation and a similar argument to the one for the Fourier coefficients of the singular measures, we get that for $n\in\mathbb{N}$,
	$$
	e^{in\theta}\widehat{h_f} (n)+e^{-in\theta}\widehat{h_f}(-n)=	e^{in\theta}\widehat{h_g}(n)+e^{-in\theta}\widehat{h_g}(-n).
	$$
	Hence, by this equation and \eqref{eq:hf} we get that $\widehat{h_g}(n)=\widehat{h_f}(n)$ for all $n\in\mathbb Z$. Therefore $h_f=h_g$, and so $O_f=O_g$.
	
	Finally, since $B_f=B_g$, $S_f=S_g$ and $O_f=O_g$, we have $g=cf$ for some $c\in\T$.
\end{proof}
	We now consider the coupled phase retrieval problem on the strip that includes a more general form of the constraint given in \eqref{eq:angle}. Using the previous lemma, we establish the uniqueness of the solution of the following problem.

\begin{theorem}
	\label{th:anglestrip}
	Let $f,g\in H^2_\tau(\mathcal{S})$ such that $|g(x)|=|f(x)|$ for $x\in\R$ and
	$$
	|g(z)|=|f(z)|, \quad z\in (-e^{i\theta}+a, e^{i\theta}+a)
	$$
	where $a\in\R$ and $\theta\notin\pi\mathbb Q$. Then $g=cf$ for some $c\in\T$.
\end{theorem}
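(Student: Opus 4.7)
The plan is to work directly on the strip with the ratio $h = f/g$, extending the two modulus conditions to functional equations and exploiting $\theta\notin\pi\mathbb Q$ via the identity theorem. Since translation by $-a$ is an automorphism of $\mathcal{S}$ preserving $H^2_\tau(\mathcal{S})$, we may assume $a=0$. From $|f|^2 = |g|^2$ on $\R$, the identity $f(z)f^*(z) = g(z)g^*(z)$ (with $f^*(z)=\overline{f(\bar z)}$) extends by analyticity to all of $\mathcal{S}$, as in Section~3.1. To encode the second condition, let $\sigma(z):=e^{2i\theta}\bar z$ denote the Euclidean reflection across the line $e^{i\theta}\R$, and set $f^{\#}(z):=\overline{f(\sigma(z))}$. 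This is holomorphic in $z$ on the convex, hence connected, open set $\mathcal{S}_* := \mathcal{S}\cap e^{2i\theta}\mathcal{S}$, which contains the segment $L := (-e^{i\theta},e^{i\theta})$. Since $\sigma$ fixes $L$ pointwise, the identity $|f|^2 = |g|^2$ on $L$ reads $ff^{\#} = gg^{\#}$ on $L$, and extends to $\mathcal{S}_*$ by the identity theorem.

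Since $g\not\equiv 0$, the ratio $h := f/g$ is meromorphic on $\mathcal{S}$. Dividing the two functional equations on $\mathcal{S}_*$ yields
$$\overline{h(\bar z)} \;=\; \frac{1}{h(z)} \;=\; \overline{h(\sigma(z))},$$
hence $h(\bar z) = h(e^{2i\theta}\bar z)$. Substituting $\zeta = \bar z$, which makes $\zeta$ range over $\mathcal{S}\cap e^{-2i\theta}\mathcal{S}$ as $z$ ranges over $\mathcal{S}_*$, one obtains
$$h(\zeta) = h(e^{2i\theta}\zeta) \qquad\text{for every }\zeta \in \mathcal{S}\cap e^{-2i\theta}\mathcal{S}.$$

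To conclude, pick any $\zeta_0$ with $|\zeta_0| < 1$ which is not a pole of $h$; then the full orbit $\{e^{2in\theta}\zeta_0 : n\in\mathbb Z\}$ stays in the Euclidean disc $\{|z|<1\}\subset\mathcal{S}$, and iterating the functional equation gives $h(e^{2in\theta}\zeta_0) = h(\zeta_0)$ for every $n\in\mathbb Z$. Since $\theta\notin\pi\mathbb Q$, this orbit is dense in the circle $\{|z|=|\zeta_0|\}$, so the meromorphic function $h - h(\zeta_0)$ has a zero set with accumulation points, and the identity theorem forces $h\equiv h(\zeta_0)$ on $\mathcal{S}$. Finally $|f|=|g|$ on $\R$ gives $|h|=1$ there, so the constant value of $h$ lies in $\T$, proving $f = cg$ with $c\in\T$. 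The main subtlety is bookkeeping the domains so that the reflected equation $ff^{\#} = gg^{\#}$ extends to a region large enough to iterate, and in particular that the unit disc $\{|z|<1\}$ sits inside $\mathcal{S}$, so that the irrational rotation by $2\theta$ produces a full dense orbit without ever leaving the strip. Transferring the problem to the disc via $\phi$ would be the natural reflex after Lemma~\ref{lem:angledisc}, but the image of $L$ under $\phi$ is a curve rather than a diameter, so Lemma~\ref{lem:angledisc} does not apply verbatim; the strip approach above circumvents this.
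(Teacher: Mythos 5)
Your proof is correct, but it takes a genuinely different route from the paper. The paper's proof is a two-line reduction: after translating so that $a=0$, it dilates, setting $f_{1/2}(z)=f(z/2)$, $g_{1/2}(z)=g(z/2)$, so that these are in $H^2(\D)$ and satisfy $|f_{1/2}|=|g_{1/2}|$ on the two diameters $(-1,1)$ and $e^{i\theta}(-1,1)$; Lemma \ref{lem:angledisc} (whose proof uses the Blaschke/singular/outer factorization and Fourier coefficients of the singular measures and of $\log|f|$ on $\T$) then gives $g_{1/2}=cf_{1/2}$, and analytic continuation from $\frac12\D$ to $\mathcal{S}$ finishes. Note that your closing remark about $\phi$ is beside the point: the paper never maps the segment through $\phi$, it uses the plain dilation $z\mapsto z/2$, under which the segment stays a diameter, so Lemma \ref{lem:angledisc} does apply after rescaling. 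Your argument instead works directly on the strip with $h=f/g$: the two reflections give $\overline{h(\bar z)}=1/h(z)$ on $\mathcal{S}$ and $\overline{h(\sigma(z))}=1/h(z)$ on $\mathcal{S}\cap e^{2i\theta}\mathcal{S}$, hence $h(\zeta)=h(e^{2i\theta}\zeta)$ on $\mathcal{S}\cap e^{-2i\theta}\mathcal{S}\supset\{|\zeta|<1\}$, and density of the irrational-rotation orbit plus the identity theorem force $h$ to be a unimodular constant. This buys you a self-contained, elementary proof that bypasses the Hardy-space factorization entirely and in fact only uses holomorphy of $f,g$ on $\mathcal{S}$ (the $H^2_\tau$ hypothesis is not needed for this step); what it does not give you is the disc statement of Lemma \ref{lem:angledisc} itself, which in the paper carries independent interest for the uniqueness discussion. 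Two routine points you should make explicit: dispose of the trivial case $g\equiv 0$ (then $f\equiv 0$), and observe that the pole set of $h$ in the unit disc is invariant under the rotation (so the orbit of a non-pole consists of non-poles, and an accumulation point of the zeros of $h-h(\zeta_0)$ cannot be a pole), so the identity theorem for meromorphic functions applies as you claim.
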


\begin{proof}
	Without loss of generality, we let $a=0$ so that the segment intersects the real line at the origin. Consider $f_{1/2}(z)=f(\frac{1}{2} z),\, g_{1/2}(z)=g(\frac{1}{2} z)$ for all $z\in\D$. Observe that $f_{1/2},g_{1/2}\in H^2(\D)$, and $|g_{1/2}|=|f_{1/2}|$ on $(-1,1)$ and on $e^{i\theta}(-1,1)$. Hence, $g_{1/2}=cf_{1/2}$ on $\D$ for some $c\in \T$ by the Lemma \ref{lem:angledisc}, and so $g=cf$ on $\frac{1}{2}\D$. Therefore, since $f,g\in \mathrm{Hol}(\mathcal{S})$ and $g=cf$ on $\frac{1}{2}\D$ so we have $g=cf$ on $\mathcal{S}$.
\end{proof}
%

\subsection*{Acknowledgements}
This study has been carried out with financial support from the French State, managed
by the French National Research Agency (ANR) in the frame of the ``Investments for
the Future” Programme IdEx Bordeaux -CPU (ANR-10-IDEX-03-02).

This paper was completed during the first author's visit at the Schr\"odinger Institute, Vienna, during the workshop ``Operator Related Function Theory''. We kindly acknowledge ESI's hospitality.

The research of the second author is partially supported by the project ANR-18-CE40-0035 and the Joint French-Russian Research Project PRC-CNRS/RFBR 2017-2019.

The third author is supported by the CHED-PhilFrance scholarship from Campus France and the Commission of Higher Education (CHED), Philippines.

We would like to thank the referees for their helpful comments and suggestions. We truly appreciate the time they spent to check for corrections. Some results in this paper have been announced in \cite{JKP}. We also thank the referees of that announcement for their helpful comments that also led to improvements here.

\IfFileExists{\jobname.bbl}{}
 {\typeout{}
  \typeout{******************************************}
  \typeout{** Please run "bibtex \jobname" to optain}
  \typeout{** the bibliography and then re-run LaTeX}
  \typeout{** twice to fix the references!}
  \typeout{******************************************}
  \typeout{}
 }

\end{document}